\title{On groups all of whose undirected Cayley graphs of bounded valency are integral}
\author{Istv\'an Est\'elyi,$^{a,b}$   \; Istv\'an Kov\'acs$^{\; b}$ \\  [+0.75ex]
$^a$ {\small FMF, University of Ljubljana, Jadranska 19, 1000 Ljubljana, Slovenia} \\ [-0.5ex]
$^b$ {\small IAM, University of Primorska, Muzejski trg 2, 6000 Koper, Slovenia} 
}
\date{}
\newtheorem{thm}{Theorem}[section]
\newtheorem{lem}[thm]{Lemma}
\newtheorem{cor}[thm]{Corollary}
\newtheorem{prop}[thm]{Proposition}
\def\G{\mathcal{G}}
\def\R{\mathcal{R}}
\def\bR{\mathbf{R}}
\def\S{\mathbf{S}}
\def\Z{\mathbb{Z}}
\DeclareMathOperator{\cay}{Cay}
\newcommand{\comment}[1]{}
\begin{document}

\maketitle

\let\thefootnote\relax\footnote{
The first author was supported by the Young Researcher grant of ARRS (Agencija za raziskovanje Republike Slovenija),
and the ARRS grant no.\ P1-0294. The second author was supported  by the ARRS grant no.\ P1-0285. 
\\  [+0.5ex]
{\it  E-mail addresses:} istvan.estelyi@student.fmf.uni-lj.si (Istv\'an Est\'elyi),  istvan.kovacs@upr.si (Istv\'an Kov\'acs).
}

\begin{abstract}
A finite group $G$ is called Cayley integral if all undirected Cayley graphs over $G$ are integral, 
i.e., all eigenvalues of the graphs are integers. The Cayley integral groups have been determined by 
Kloster and Sander in the abelian case, and by Abdollahi and Jazaeri, and independently by 
Ahmady, Bell and Mohar in the non-abelian case. In this paper we generalize this class of groups by 
introducing the class $\G_k$ of finite groups $G$ for which all graphs $\cay(G,S)$ are integral if $|S| \le k$.  
It will be proved that $\G_k$ consists of the Cayley integral groups if $k \ge 6;$  and 
the classes $\G_4$ and $\G_5$ are equal,  and consist of:\ (1) the Cayley integral groups, (2) the 
generalized dicyclic groups $Dic(E_{3^n} \times \Z_6),$ where $n \ge 1$. \medskip

\noindent{\it Keywords:} integral graph, Cayley graph, Cayley integral group. \medskip

\noindent{\it MSC 2010:}  05C50, 20C10 (primary), 05C25, 05E10 (secondary). 
\end{abstract}

\section{Introduction}

A finite simple graph is called \emph{integral} if all its eigenvalues are integers. 
These graphs were introduced by Harary and Schwenk \cite{HarS74}, and have been attracting 
considerable attention (see the surveys \cite{AhmABS09,BalCRSS03}). 
In \cite{AbdV09}, Abdollahi and Vatandoost proposed to consider those integral graphs which are 
also Cayley graphs. Recall that, given a finite group $G$ and a subset $S$ of $G$ with $1 \notin S$ and 
$S=S^{-1}=\{s^{-1} : s \in S\},$ the \emph{Cayley graph} $\cay(G,S)$ has vertex set $G,$ and edges in the form 
$\{g,s g\},$ $g \in G$ and $s \in S$. Klotz and  Sander \cite{KloS10} called a finite group $G$ \emph{Cayley integral} if  
all graphs $\cay(G,S)$ are integral; furthermore, they determined the abelian Cayley integral groups (see \cite[Theorem 13]{KloS10}):

\begin{thm}\label{KS}{\rm  (Klotz and Sander \cite{KloS10})}  
The only finite abelian Cayley integral groups are $E_{2^n} \times E_{3^m}$ and $E_{2^n} \times \Z_4^m,$ where $m,n \ge 0$.   
\end{thm}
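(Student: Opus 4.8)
The plan is to collapse the whole statement into a single condition on element orders. For a finite abelian group $G$ with character group $\widehat{G}$, the eigenvalues of $\cay(G,S)$ are exactly the character sums $\lambda_\chi=\sum_{s\in S}\chi(s)$, $\chi\in\widehat{G}$. Since $S=S^{-1}$, I would split $S$ into the involutions it contains and into pairs $\{s,s^{-1}\}$ with $s\ne s^{-1}$. An involution $s$ contributes $\chi(s)\in\{1,-1\}$, while each pair contributes $\chi(s)+\chi(s^{-1})=\chi(s)+\overline{\chi(s)}=2\,\mathrm{Re}\,\chi(s)$. Hence $\lambda_\chi$ is an integer for every $\chi$ as soon as $2\,\mathrm{Re}\,\chi(s)\in\Z$ for each non-involution $s\in S$. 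The elementary number-theoretic input is that, for an element of order $n$, the value $2\cos(2\pi/n)$ lies in $\Z$ precisely when $\varphi(n)\le 2$, i.e.\ when $n\in\{1,2,3,4,6\}$.

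Next I would establish the key equivalence: \emph{$G$ is Cayley integral if and only if every element of $G$ has order in $\{1,2,3,4,6\}$}. For necessity, apply the hypothesis to $S=\{g,g^{-1}\}$ for an element $g$ of order $n\ge 3$; then $\lambda_\chi=2\,\mathrm{Re}\,\chi(g)$, and choosing $\chi$ with $\chi(g)$ a primitive $n$-th root of unity — possible because the restriction map $\widehat{G}\to\widehat{\langle g\rangle}$ is onto — forces $2\cos(2\pi/n)\in\Z$, hence $n\in\{1,2,3,4,6\}$. For sufficiency, the pairing above shows that if every element order lies in $\{1,2,3,4,6\}$ then each inverse-pair contributes $2\,\mathrm{Re}\,\zeta$ with $\zeta$ a root of unity of order dividing one of $3,4,6$, which is an integer; thus every $\lambda_\chi$ is an integer and every $\cay(G,S)$ is integral.

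Finally I would translate the order condition into the stated list. Cauchy's theorem rules out prime divisors of $|G|$ other than $2$ and $3$, so $G=P_2\times P_3$ with $P_p$ the Sylow $p$-subgroup. The $3$-part has exponent dividing $3$ (order $9$ is excluded), whence $P_3\cong E_{3^m}$; the $2$-part has exponent dividing $4$ (order $8$ is excluded), whence $P_2\cong E_{2^n}\times\Z_4^{m'}$ by the fundamental theorem of finite abelian groups. If both $m'\ge 1$ and $m\ge 1$, the product of an order-$4$ element and an order-$3$ element would have order $12$, a contradiction; so $m'=0$ or $m=0$, leaving exactly $E_{2^n}\times E_{3^m}$ or $E_{2^n}\times\Z_4^{m'}$. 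Conversely each such group has all element orders in $\{1,2,3,4,6\}$, so the sufficiency direction makes it Cayley integral.

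I expect the main obstacle to be the equivalence of the first paragraph rather than the group-theoretic bookkeeping: one must verify that the pairing genuinely forces integrality for \emph{arbitrary} symmetric connection sets (not only product-form ones), and pin down rigorously that $2\cos(2\pi/n)$ is rational exactly for $n\in\{1,2,3,4,6\}$, together with the surjectivity of character restriction used to realize a primitive root. Once these are in place, the reduction to element orders and the derivation of the two families are routine.
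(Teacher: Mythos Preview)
The paper does not supply its own proof of this theorem; it is quoted as a background result from Klotz and Sander~\cite{KloS10}, so there is nothing to compare against directly. Your argument is correct and is essentially the standard one: via character sums you reduce Cayley integrality of a finite abelian group to the condition that every element order lies in $\{1,2,3,4,6\}$ (using that $2\cos(2\pi/n)\in\Z$ exactly when $\varphi(n)\le 2$, together with surjectivity of character restriction to cyclic subgroups), and then read off the structure from the fundamental theorem of finite abelian groups plus the observation that an order-$4$ element and an order-$3$ element cannot coexist. The only cosmetic point is that in the sufficiency direction you should note that $\chi(s)$ is a root of unity whose order divides the order of $s$ (not necessarily equal to it), which you implicitly use when asserting $2\,\mathrm{Re}\,\chi(s)\in\Z$; this is of course immediate.
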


The non-abelian Cayley integral groups were found recently by Abdollahi and Jazaeri (see \cite[Theorem 1.1]{AbdJ14}), and independently by Ahmady et al. (see \cite[Theorem 4.2]{AhmBM13}): 

\begin{thm}\label{ABM}{\rm  (Abdollahi and Jazaeri \cite{AbdJ14};  Ahmadi et al.\ \cite{AhmBM13})}
The only finite non-abelian Cayley integral groups are 
$D_6, \, Dic_{12}$ and $Q_8 \times E_{2^n},$ where $n \ge 0$.  
\end{thm}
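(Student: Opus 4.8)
The plan is to prove the two directions separately, essentially all of the difficulty lying in the ``only'' direction. That $D_6$, $Dic_{12}$ and each $Q_8\times E_{2^n}$ is Cayley integral I would verify through the representation-theoretic description of the spectrum: the eigenvalues of $\cay(G,S)$ are precisely the eigenvalues of the operators $\sum_{s\in S}\rho(s)$ as $\rho$ ranges over the irreducible representations of $G$. For $D_6$ and $Dic_{12}$ this is a short finite check. For the infinite family it is in fact uniform: in the unique $2$-dimensional irreducible representation $\rho$ of $Q_8$ one has $\rho(g^{-1})=-\rho(g)$ for every element $g$ of order $4$, so the order-$4$ contributions to $\sum_{s\in S}\rho(s)$ cancel in pairs and this matrix is always scalar, while on the one-dimensional representations the eigenvalues are sums of values $\pm 1$; hence every $\cay(Q_8\times E_{2^n},S)$ is integral.

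For the converse, let $G$ be non-abelian and Cayley integral. The basic tool is closure under subgroups: if $S\subseteq H\le G$ then $\cay(G,S)$ is a disjoint union of $[G:H]$ copies of $\cay(H,S)$, so $H$ is again Cayley integral. Two elementary consequences already cut the possibilities down sharply. Applying closure to $\langle g\rangle$ and noting that $\cay(\langle g\rangle,\{g,g^{-1}\})$ is a cycle $C_{|g|}$, integral only for $|g|\in\{1,2,3,4,6\}$, forces $\exp(G)\mid 12$ (the same bound also follows from Theorem~\ref{KS}). And if $a,b$ are distinct involutions, then $\cay(\langle a,b\rangle,\{a,b\})$ is a single cycle of length $2|ab|$, so $|ab|\le 3$; hence any two involutions generate $\Z_2^2$ or $S_3$, and $G$ has no dihedral subgroup $D_{2m}$ with $m\ge 4$.

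The main structural step I would isolate as a proposition: the non-abelian two-generated Cayley integral groups are exactly $D_6$, $Dic_{12}$ and $Q_8$. Granting this, the global assembly is cleanest for $2$-groups: if $G$ is a non-abelian Cayley integral $2$-group, then every non-abelian two-generated subgroup is Cayley integral, hence (being a $2$-group) isomorphic to $Q_8$; one deduces that every subgroup of $G$ is normal, so $G$ is Hamiltonian, and Dedekind's theorem gives $G\cong Q_8\times E_{2^n}$. When $3\mid|G|$ one uses the proposition together with an analysis of how the Sylow $2$- and $3$-subgroups interact, the aim being to show that the Sylow $3$-subgroup is cyclic of order $3$ and that the $2$-part acts by inversion, leaving only $D_6$ and $Dic_{12}$.

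I expect the difficulty to concentrate in two places, both invisible to the elementary cycle and involution lemmas. First, proving the proposition means ruling out every other non-abelian two-generated group of exponent dividing $12$ that survives those lemmas — for instance $A_4$, $\Z_4\rtimes\Z_4$, $\Z_3\times S_3$ and the Heisenberg group of order $27$ — by exhibiting in each a connection set whose Cayley graph has an irrational eigenvalue; this is a finite but delicate computation. Second, and more seriously, the groups obstructing the case $3\mid|G|$ need not be two-generated: the generalized dihedral groups $E_{3^n}\rtimes\Z_2$ ($n\ge 2$) and the generalized dicyclic groups $Dic(E_{3^n}\times\Z_6)$ ($n\ge 1$) have all of their two-generated subgroups Cayley integral, so the proposition cannot see them. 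They pass every restriction coming from the smallest connection sets — the dicyclic groups in particular have a unique involution and all element orders in $\{1,2,3,4,6\}$, so they survive all sets of size $\le 5$ — and are eliminated only by connection sets of size up to $6$. Producing these critical graphs, and thereby forcing the Sylow $3$-subgroup to have rank $1$, is the crux; it is precisely the threshold quantified by the paper's main theorem, where the family $Dic(E_{3^n}\times\Z_6)$ is what separates $\G_5$ from $\G_6$.
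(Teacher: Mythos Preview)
This theorem is not proved in the paper: it is quoted from \cite{AbdJ14} and \cite{AhmBM13} as a background result, and the paper builds on it rather than re-deriving it. So there is no ``paper's own proof'' to set your proposal against directly.

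That said, the paper's main theorem (Theorem~\ref{T main}) does yield Theorem~\ref{ABM} as a by-product, since it shows $\G_6$ coincides with the Cayley integral groups, and the route taken there differs from yours in an interesting way. For $2$-groups you pass through your two-generated proposition and then Dedekind's theorem on Hamiltonian groups; the paper instead works with \emph{minimal non-abelian} subgroups, uses R\'edei's list (Corollary~\ref{Rc}) to reduce to five candidates, eliminates $D_8,H_2,H_{16},H_{32}$ by explicit non-integral Cayley graphs, and then invokes Janko's structural result \cite{Jan07} to conclude $G\cong Q_8\times E_{2^n}$. Your route avoids the external input from Janko but pays for it with a stronger proposition (two-generated is a much larger class than minimal non-abelian). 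For $3\mid|G|$ the paper does not go through two-generated subgroups at all: it first shows the Sylow $3$-subgroup is normal by excluding $A_4$ and $Q_8\rtimes\Z_3$ with size-$4$ connection sets (Lemma~\ref{L normal}), and then analyses the action of the $2$-part (Lemma~\ref{L non-nil}).

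Your plan is sound, and you have correctly located the genuine crux: the family $Dic(E_{3^n}\times\Z_6)$ survives every connection set of size $\le 5$ (this is exactly Lemma~\ref{L dic}), so any proof of Theorem~\ref{ABM} must at some point produce a size-$6$ witness --- precisely the threshold in Theorem~\ref{T main}. One small correction: the generalized dihedral groups $E_{3^n}\rtimes\Z_2$ do \emph{not} persist in the same way. As you say, all of their two-generated subgroups are Cayley integral (each is $D_6$ or abelian), so your proposition indeed cannot see them; but the paper exhibits a size-$3$ connection set $\{xu,xu^{-1},xv\}$ in $E_9\rtimes\Z_2$ with eigenvalues $\pm\sqrt{3}$ (proof of Lemma~\ref{L non-nil}), so these groups are already outside $\G_3$ and do not require anything near size $6$ to eliminate.
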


The cyclic, the elementary abelian, and   
the dihedral group of order $n$ are denoted by $\Z_n, \, E_n$ and $D_n,$ respectively. 
Let $A$ be an abelian group having a unique involution $t$ and of order $|A| > 2$. 
The {\em generalized dicyclic group} $Dic(A) = \langle A, x \rangle,$  where 
$x^2=t,$ and $a^x=a^{-1}$ for every $a \in A$ (see \cite[page 252]{Sco64}).
 In the case when $A \cong \Z_n$ it is also called the {\em dicyclic group} of order $2n,$ denoted by 
$Dic_{2n};$ and when $A \cong \Z_{2^n}$ it is also  known as the {\em generalized quaternion group} of order 
$2^{n+1},$ denoted by $Q_{2^{n+1}}$.  \medskip

In this paper we are going to study groups $G$ for which we require $\cay(G,S)$ to be integral only 
when $|S|$ is bounded by a fixed number.  Formally, for $k \in \mathbb{N},$ we set  
$$
\G_k = \big\{  G \,  : \, \cay(G,S) \text{ is integral whenever } |S| \le k \big\}.
$$

It is obvious that $\G_1$ is just the class of all finite groups, and in $\G_2$ there are exactly the groups  
whose non-identity elements are of order $2,3,4$ or $6$. 

The class $\G_3$ is the most intricate. Regarding $p$-groups, 
it is clear that all groups of exponent  $3$ are in $\G_3,$ and we will show that 
a non-abelian $2$-group is in $\G_3$ 
if and only if  it  is of exponent $4,$ and any minimal non-abelian subgroup is isomorphic to $Q_8, \, H_2$ or $H_{32}$
(see Proposition \ref{P 2-Grp}). It is known that there are five minimal non-abelian groups of exponent $4$ (see Corollary \ref{Rc}).  In \cite{Jan07}, Janko described the non-abelian $2$-groups all of whose minimal non-abelian 
subgroups are either of order $8,$ or isomorphic to exactly one of these five groups. 
In particular, if they all are isomorphic to $Q_8,$ then 
$G \cong Q_{2^m} \times E_{2^n},$ where $m \ge 3, \, n \ge 0$ (see \cite[Corollary 2.4]{Jan07}).  
We will make use of this result  when deriving that $Q_8 \times E_{2^n}$ are the only non-abelian 
$2$-groups in $\G_k$ if $k \ge 4$.   

Our goal in this paper is to determine the classes $\G_k$ when $k \ge 4$. Our main result is the following theorem: 

\begin{thm}\label{T main}
Every class $\G_k$ consists of the Cayley integral groups if $k \ge 6$. 
Furthermore, $\G_4 $ and $\G_5$ are equal, and consist of the following groups:  
\begin{enumerate}[(1)]
\item the Cayley integral groups, 
\item the generalized dicyclic groups $Dic(E_3^n \times \Z_6),$ where $n \ge 1$.
\end{enumerate}
\end{thm}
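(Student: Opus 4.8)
The plan is to prove the two inclusions separately, using throughout two elementary facts: that $\G_k$ is closed under taking subgroups (for $S\subseteq H\le G$ the graph $\cay(G,S)$ is a disjoint union of $[G:H]$ copies of $\cay(H,S)$, hence has the same spectrum), and that the classes are nested, $\G_4\supseteq\G_5\supseteq\G_6\supseteq\cdots\supseteq\{\text{Cayley integral groups}\}$, with the Cayley integral groups being exactly $\bigcap_k\G_k$. Since every $G\in\G_2$ has all element orders in $\{2,3,4,6\}$, any $G\in\G_4$ satisfies $|G|=2^a3^b$ and has exponent dividing $12$, so I would reduce to the Sylow subgroups. For the Sylow $2$-subgroup $P$, Proposition \ref{P 2-Grp} already forces every minimal non-abelian subgroup of $P$ to be $Q_8,H_2$ or $H_{32}$; the extra room in $\G_4$ should let me exhibit connection sets of size $\le 4$ inside $H_2$ and $H_{32}$ carrying an irrational eigenvalue, so that only $Q_8$ survives, and Janko's \cite[Corollary 2.4]{Jan07} then yields $P\cong Q_{2^m}\times E_{2^n}$; the order constraint kills $Q_{2^m}$ for $m\ge4$ (an element of order $8$), leaving $P\cong Q_8\times E_{2^n}$ or $P$ abelian of exponent at most $4$, i.e.\ $E_{2^n}$ or $\Z_4^c\times E_{2^n}$. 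For the Sylow $3$-subgroup $Q$, of exponent $3$, I would show it is elementary abelian by ruling out the single obstruction, the Heisenberg group $H_{27}$: a direct computation shows that on each $3$-dimensional irreducible constituent $\cay(H_{27},\{a^{\pm1},b^{\pm1}\})$ has spectrum $\{-2,\,1\pm\sqrt3\}$, so $H_{27}\notin\G_4$, and hence every non-abelian exponent-$3$ subgroup being forbidden forces $Q\cong E_3^m$.

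The core of the argument, and the step I expect to be the main obstacle, is the mixed case $a,b\ge1$. Here the absence of elements of order $12$ is decisive: an order-$4$ element cannot centralise any order-$3$ element, so every order-$4$ element must act fixed-point-freely on $Q$, and once the genuinely order-$4$ linear actions are excluded the action is forced to be inversion $x\mapsto x^{-1}$, with the element squaring to a unique central involution --- precisely the dicyclic relation. I would then try to show that the whole extension is $Dic(A)$ with $A$ abelian and $3$-part $Q$. The delicate point is the rank of $Q$: when $Q\cong\Z_3$ the possibilities collapse to the Cayley integral groups $D_6$ and $Dic_{12}$, whereas when $\operatorname{rank}(Q)\ge2$ I must exclude every configuration producing an involutory reflection on $Q$, since a reflection together with a rank-$2$ subgroup already gives non-integrality at valency $3$: in the generalized dihedral group over $E_9$ the set $\{\tau,\tau a,\tau b\}$ yields eigenvalues $\pm|1+2\omega|=\pm\sqrt3$. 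Ruling out all such sporadic mixed groups (for instance $\Z_3\times S_3$, $A_4$, the generalized dihedral groups over $E_3^m$ with $m\ge2$, and the faithful actions $E_9\rtimes\Z_4$ and $E_9\rtimes Q_8$) and proving that the only survivors are the dicyclic groups $Dic(E_3^n\times\Z_6)$ is the group-theoretic heart of the classification, where the integrality constraints and the extension theory must be interwoven most carefully.

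It then remains to verify the spectral threshold, which simultaneously establishes $Dic(E_3^n\times\Z_6)\in\G_5$ and its non-membership in $\G_6$. Writing $A=E_3^n\times\Z_6\cong E_2\times E_3^{n+1}$ (Cayley integral by Theorem \ref{KS}) and decomposing a symmetric $S$ as $S_0=S\cap A$ together with $S_1=\{ax:a\in T_1\}$, where $T_1$ is invariant under multiplication by the central involution $t$, I would compute the eigenvalues coming from the $2$-dimensional irreducibles $\rho_\lambda$ to be $\alpha\pm|\beta|$, with $\alpha=\sum_{a\in S_0}\lambda(a)\in\Z$ and $\beta=\sum_{a\in T_1}\lambda(a)$. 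The $t$-invariance of $T_1$ forces $\beta=0$ whenever $\lambda(t)=-1$, while for $\lambda(t)=1$ one has $\beta=2\gamma$ with $\gamma$ a sum of $|T_1|/2$ cube roots of unity. If $|S|\le5$ then $|T_1|\le4$, so $\gamma$ is a sum of at most two cube roots of unity and $|\gamma|\in\{0,1,2\}$ is an integer; hence all eigenvalues are integral and $Dic(E_3^n\times\Z_6)\in\G_5$. On the other hand, as soon as $\operatorname{rank}(Q)\ge2$ one can take $|T_1|=6$ and a character with $\gamma=1+2\omega$, giving $|\gamma|=\sqrt3$; the resulting graph of valency $6$ is non-integral, so these groups lie outside $\G_6$ and are not Cayley integral.

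Finally I would assemble the three ingredients. The structural analysis gives $\G_4\subseteq\{\text{Cayley integral groups}\}\cup\{Dic(E_3^n\times\Z_6):n\ge1\}$, the threshold computation gives $\{Dic(E_3^n\times\Z_6):n\ge1\}\subseteq\G_5$, and these together with $\G_5\subseteq\G_4$ squeeze the inclusions into equalities, so $\G_4=\G_5$ is exactly the stated list. Since $Dic(E_3^n\times\Z_6)\notin\G_6$ while $\G_6\subseteq\G_5$, every member of $\G_6$ is Cayley integral, whence $\G_k=\{\text{Cayley integral groups}\}$ for all $k\ge6$. I expect paragraph two, the forced emergence of the dicyclic structure in the rank-$\ge2$ mixed case, to absorb the bulk of the technical work.
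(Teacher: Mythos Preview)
Your overall architecture is sound, and several pieces are correct and in places cleaner than the paper's: the reduction of the Sylow $2$- and $3$-subgroups to the Cayley integral list by excluding $H_2$, $H_{32}$ and $H_{27}$ matches Lemma~\ref{L p}; your representation-theoretic verification that $Dic(E_{3^n}\times\Z_6)\in\G_5$ (via the $t$-invariance of $T_1$ forcing $\beta=0$ when $\lambda(t)=-1$, and $|\gamma|\in\{0,1,2\}$ when $|T_1|\le4$) is a tidy alternative to the case analysis in Lemma~\ref{L dic}; and your direct valency-$6$ witness with $\gamma=1+2\omega$ is a valid substitute for the paper's quotient argument via Lemma~\ref{L basic}(iii).

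There is, however, a genuine gap in the mixed case. Your second paragraph tacitly assumes that the Sylow $3$-subgroup $Q$ is normal in $G$: the phrases ``every order-$4$ element must act fixed-point-freely on $Q$'' and ``the action is forced to be inversion'' only make sense once $Q\unlhd G$, and every obstruction you list ($\Z_3\times S_3$, the generalized dihedral groups, $E_9\rtimes\Z_4$, $E_9\rtimes Q_8$) already has $Q$ normal. But normality of $Q$ is not automatic for solvable $\{2,3\}$-groups of exponent dividing $12$, and establishing it is itself a substantial step. The paper isolates this as Lemma~\ref{L normal}, proved by induction on $|G|$: one passes to a maximal normal subgroup $M$, and in the awkward case $|G:M|=3$ one must quotient by the (inductively normal) Sylow $3$-subgroup of $M$ \emph{while remaining in $\G_k$}. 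That last move is exactly Lemma~\ref{L section} (quotients by odd abelian normal subgroups preserve $\G_k$), an ingredient you nowhere invoke and without which the induction stalls. The base of that induction also requires excluding $Q_8\rtimes\Z_3$---a $3$-element acting non-trivially on $Q_8$---which is orthogonal to the ``$2$-element acting on $Q$'' picture you sketch and is missing from your list of sporadic cases. Until normality of $Q$ is secured, the subsequent analysis of the action of $2$-elements on $Q$, and hence the emergence of the dicyclic structure, does not get off the ground.
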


In Section 2 we prove some useful properties of the groups in $\G_k$. 
Theorem \ref{T main} will be derived  in Section 3.

\section{Some properties of the groups in $\mathbf{\G_k}$}

All groups in this paper will be finite. 
Our notation and terminology for finite groups follow \cite{Sco64}.  \medskip

Case (i) in our first lemma is essentially \cite[Lemma 11]{KloS10}, cases (ii) and (iii) can be deduced from 
\cite[Lemma 4.3]{AhmBM13}.

\begin{lem}\label{L basic}
The following hold for every $G \in \G_k$ if $k \ge 2$.
\begin{enumerate}[(i)]
\item For every $x \in G$, the order of $x$  is in $\{1,2,3,4,6\}$. 
\item For every  subgroup $H \le G,$ $H \in \G_k$.
\item For every $N \unlhd G$ such that $|N| \mid k,$ $G/N \in \G_l,$ where $l =  k / |N|$. 
\end{enumerate}
\end{lem}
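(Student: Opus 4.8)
The plan is to treat the three parts separately, in each case reducing the integrality requirement on $G$ to a statement about a smaller graph whose spectrum is controlled by the spectrum of a Cayley graph over $G$ itself.

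For (i) I would test $G$ on the single-element connection set $S=\{x,x^{-1}\}$, which is inverse-closed, avoids the identity, and has size at most $2\le k$, so $\cay(G,S)$ must be integral. When $|x|\ge 3$ this set has two elements and $\cay(G,S)$ is a disjoint union of cycles of length $|x|$; in particular the connected component through $1$ is the cycle $C_{|x|}$, whose eigenvalues are $2\cos(2\pi j/|x|)$. Integrality forces $2\cos(2\pi/|x|)\in\Z$, and since $2\cos(2\pi/n)$ is an algebraic integer of degree $\varphi(n)/2$ over $\mathbb{Q}$ for $n\ge 3$, it is rational exactly when $\varphi(n)\le 2$. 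Together with the trivial cases $|x|\in\{1,2\}$ this yields $|x|\in\{1,2,3,4,6\}$ (the crystallographic restriction).

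For (ii), given any admissible $S\subseteq H$ with $|S|\le k$, I would observe that $S$ is equally an admissible connection set for $G$, so $\cay(G,S)$ is integral. Because $S\subseteq H$, left multiplication by elements of $S$ preserves each right coset $Hg_0$, so the right cosets of $H$ are unions of connected components, and the subgraph induced on $Hg_0$ is isomorphic to $\cay(H,S)$ via $h\mapsto hg_0$. Hence $\cay(G,S)$ is a disjoint union of $|G:H|$ copies of $\cay(H,S)$, and the spectrum of $\cay(H,S)$ is contained in that of $\cay(G,S)$, hence in $\Z$. Thus $H\in\G_k$.

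For (iii), let $\pi\colon G\to G/N$ be the projection and let $\bar S\subseteq G/N$ be admissible with $|\bar S|\le l=k/|N|$. I would lift it to $S=\pi^{-1}(\bar S)$. This $S$ is inverse-closed because $\bar S$ is, and it avoids the identity because $\bar 1\notin\bar S$ forces $N\cap S=\emptyset$; moreover $|S|=|N|\,|\bar S|\le k$, so $\cay(G,S)$ is integral. Counting, for $g,g'\in G$ the number of $s\in S$ with $sg\in Ng'$ equals $|N|$ if $\bar g'\bar g^{-1}\in\bar S$ and $0$ otherwise, while $N\cap S=\emptyset$ gives no edges inside a single coset $gN$. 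Consequently $\cay(G,S)$ is the blow-up of $\cay(G/N,\bar S)$ obtained by replacing each vertex by an independent set of size $|N|$ and each edge by a complete bipartite graph $K_{|N|,|N|}$; its adjacency matrix is $A\otimes J_{|N|}$, where $A$ is the adjacency matrix of $\cay(G/N,\bar S)$ and $J_{|N|}$ is the all-ones matrix. Since the eigenvalues of $J_{|N|}$ are $|N|$ and $0$, the spectrum of $\cay(G,S)$ consists of the numbers $|N|\mu$ for $\mu\in\mathrm{spec}(A)$ together with zeros. Integrality of $\cay(G,S)$ gives $|N|\mu\in\Z$ for each such $\mu$; as each $\mu$ is an algebraic integer (an eigenvalue of an integral symmetric matrix) and now rational, it must be an integer. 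Hence $\cay(G/N,\bar S)$ is integral and $G/N\in\G_l$.

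I expect the structural decompositions to be routine once set up; the only points requiring genuine care are in (iii): verifying that $S=\pi^{-1}(\bar S)$ is a legitimate connection set with $|S|\le k$, correctly identifying the tensor structure $A\otimes J_{|N|}$, and invoking that a rational algebraic integer is an integer to pass from $|N|\mu\in\Z$ back to $\mu\in\Z$.
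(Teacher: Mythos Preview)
Your proof is correct and self-contained. The paper itself does not supply an argument for this lemma; it merely cites \cite[Lemma~11]{KloS10} for part~(i) and \cite[Lemma~4.3]{AhmBM13} for parts~(ii) and~(iii), so there is no in-paper proof to compare against. Your treatment---the cycle/crystallographic-restriction argument for~(i), the coset decomposition for~(ii), and the blow-up identification $A\otimes J_{|N|}$ together with the rational-algebraic-integer step for~(iii)---is the standard route and matches what those references do; in particular the tensor observation in~(iii) is exactly how the quotient case is usually handled.
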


In contrast to the class of Cayley integral groups, the class $\G_k$ is not closed under forming factor groups for every $k$. 
For example, consider the non-trivial semidirect product $\Z_4 \rtimes \Z_4$. It is easy to see that this group is  in 
$\G_2,$ and that it has a factor group isomorphic to $D_8$. The group $D_8$ is clearly not in $\G_2$. 
Below we prove a weaker property. 

\begin{lem}\label{L section}
Let $G \in \G_k,$ and $N \unlhd G,$ $N$ is abelian and $|N|$ is odd. Then $G/N \in \G_k$.
\end{lem}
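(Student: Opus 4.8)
The plan is to transfer integrality from $G$ down to $G/N$ by lifting a connection set on $G/N$ to one of the \emph{same} size on $G$. Since $\G_1$ is the class of all finite groups, I may assume $k \ge 2$, so that Lemma \ref{L basic}(i) applies and every element of $G$ has order in $\{1,2,3,4,6\}$. Fix a symmetric subset $\bar S$ of $G/N$ with $\bar 1 \notin \bar S$ and $|\bar S| \le k$, and let $\pi : G \to G/N$ be the natural projection; the goal is to show that $\cay(G/N, \bar S)$ is integral.

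The key step is to build a symmetric transversal $S \subseteq G$ of $\bar S$, meaning that $\pi$ restricts to a bijection $S \to \bar S$, together with $1 \notin S$ and $S = S^{-1}$; note that this forces $|S| = |\bar S| \le k$. For a pair $\{\bar s, \bar s^{-1}\}$ of distinct (non-involution) elements I would simply pick a preimage $s$ of $\bar s$ and put $s^{-1}$ in for $\bar s^{-1}$; this is symmetric and avoids $1$ because $s \notin N$. The delicate case is an involution $\bar s = \bar s^{-1}$, where I must find a preimage that is itself an involution of $G$. Here is where the hypotheses enter: any preimage $s_0$ has even order, hence order $2$, $4$ or $6$ by Lemma \ref{L basic}(i); order $4$ is impossible, since then $s_0^2$ would be an involution lying in $N$, contradicting that $|N|$ is odd; if the order is $2$ then $s_0$ itself works, and if it is $6$ then $s_0^3$ is an involution with $\pi(s_0^3) = \bar s^3 = \bar s$. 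Thus each involution of $\bar S$ lifts to an involution of $G$, and a symmetric transversal $S$ of the required size exists.

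With such an $S$ in hand, I would observe that the partition of $G$ into $N$-cosets is equitable for $\cay(G, S)$: because $S$ is a transversal, a vertex $g$ has exactly one neighbour in the coset $\pi^{-1}(\bar h)$ when $\bar h \bar g^{-1} \in \bar S$ and none otherwise, independently of the chosen representative $g$. Consequently the quotient matrix of this equitable partition is precisely the adjacency matrix of $\cay(G/N, \bar S)$, so every eigenvalue of $\cay(G/N, \bar S)$ is an eigenvalue of $\cay(G, S)$. Since $|S| \le k$ and $G \in \G_k$, the graph $\cay(G, S)$ is integral, whence $\cay(G/N, \bar S)$ is integral as well. As $\bar S$ was arbitrary, this yields $G/N \in \G_k$.

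I expect the only genuine obstacle to be the lifting of involutions; the equitable-partition computation is routine once the transversal has the right size, and the whole argument hinges on the interplay between the order bound of Lemma \ref{L basic}(i) and the oddness of $N$, which is exactly what rules out the order-$4$ case. I note that abelianness of $N$ is not actually needed for this argument beyond what is subsumed by the order bound.
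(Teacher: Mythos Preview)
Your proof is correct and follows essentially the same route as the paper: lift $\bar S$ to a symmetric transversal $S$ of the same size in $G$, then observe that the spectrum of $\cay(G/N,\bar S)$ embeds in that of $\cay(G,S)$. The paper phrases the second step via Theorem~\ref{KMMM} (the trivial character of $N$ gives exactly your quotient matrix) rather than in equitable-partition language, and it lifts involutions by Cauchy's theorem in $\langle N,r\rangle$ rather than via Lemma~\ref{L basic}(i), but these are cosmetic differences; your observation that abelianness of $N$ is not actually used is also correct.
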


Before we prove the lemma, we need to recall  a result in \cite{KovMMM14} about eigenvalues of graphs which admit an 
abelian semiregular automorphism group.

Let $\Gamma$ be a graph, and $H$ be an abelian semiregular 
group of automorphisms  of $\Gamma$ with $m$ orbits on the vertex set.  
Fix $m$ vertices $v_1,\ldots,v_m$ of $\Gamma$  such that no two are from the same $H$-orbit. 
The \emph{symbol} of $\Gamma$ relative to $H$ and the $m$-tuple $(v_1,\ldots,v_m)$ is the 
$m \times m$ array $\S$ of subsets of $H,$ written as $\S = (S_{ij})_{i,j \in \{1,\ldots,m\}},$ where 
\begin{equation}\label{Sij}
S_{ij} = \{ x \in H : v_i  \sim v_j^x \text{ in } \Gamma \}.
\end{equation}
 Here and in what follows, $v_i \sim  v_j^x$ means that the vertices $v_i$ and $v_j^x$ are adjacent in $\Gamma$. 
For an irreducible character $\chi$ of $H,$ let $\chi(\S)$ be the $m \times m$ complex matrix 
defined by 
\begin{equation}\label{chiS}
(\chi(\S))_{ij} = \begin{cases}
\sum_{s \in S_{ij}}\chi(s) & \text{ if } S_{ij} \ne \emptyset \\ 0 & \text{ otherwise, } \end{cases} \; 
i,j \in \{1,\ldots,m\}.
\end{equation}
Note that, since $H$ is abelian, 
the irreducible characters are just the homomorphisms from $H$ to the multiplicative group of complex numbers. 

\begin{thm}\label{KMMM}
{\rm (Kov\'acs et al.\ \cite{KovMMM14})} With notation as above, the eigenvalues of $\Gamma$ are the 
union of eigenvalues of the matrices $\chi(\S),$ where $\chi$ runs over the set of all irreducible characters of $H$.  
\end{thm}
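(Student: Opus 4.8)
The plan is to diagonalize the adjacency matrix $A$ of $\Gamma$ by exploiting the fact that, since $H$ consists of automorphisms of $\Gamma$, its permutation matrices commute with $A$. First I would use semiregularity to set up coordinates: every $H$-orbit has size $|H|$, so each vertex of $\Gamma$ is written uniquely as $v_i^x$ for some $i \in \{1,\ldots,m\}$ and $x \in H$, identifying the vertex set with $\{1,\ldots,m\} \times H$ and giving $|V(\Gamma)| = m|H|$. Applying the automorphism $x^{-1}$ yields the clean adjacency rule $v_i^x \sim v_j^y \iff v_i \sim v_j^{yx^{-1}} \iff yx^{-1} \in S_{ij}$, which encodes the entire adjacency structure in the symbol $\S$.

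Next I would decompose the vertex space $\mathbb{C}^{V(\Gamma)}$ into $H$-isotypic components. Since $H$ is abelian and acts on each orbit as the regular representation, every irreducible character $\chi$ occurs with multiplicity exactly $m$, so $\mathbb{C}^{V(\Gamma)} = \bigoplus_\chi W_\chi$ with $\dim W_\chi = m$. For each $\chi$ and each $i$ I would write down the explicit vector $e_{i,\chi}$ supported on the $i$-th orbit by $(e_{i,\chi})_{v_i^x} = \chi(x)$; these $m$ vectors form a basis of $W_\chi$, and as $\chi$ ranges over the $|H|$ characters of $H$ they form a basis of the whole space.

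The core step is to compute the action of $A$ on each $W_\chi$. Because $A$ commutes with the $H$-action, it preserves each $W_\chi$, so $A|_{W_\chi}$ is represented by an $m \times m$ matrix in the basis $\{e_{i,\chi}\}$. A direct evaluation on a vertex $v_i^x$ of the $i$-th orbit gives, writing $[\,\cdot\,]$ for the indicator,
\[
(A\,e_{j,\chi})_{v_i^x} = \sum_{y \in H} [\,yx^{-1}\in S_{ij}\,]\,\chi(y) = \Big(\sum_{s\in S_{ij}}\chi(s)\Big)\chi(x),
\]
using $\chi(sx)=\chi(s)\chi(x)$. Hence $A\,e_{j,\chi} = \sum_{i} (\chi(\S))_{ij}\, e_{i,\chi}$, which says precisely that the matrix of $A|_{W_\chi}$ in this basis is $\chi(\S)$. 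Since $A$ is the direct sum of the operators $A|_{W_\chi}$ over all $\chi$, its spectrum is the union of the spectra of the matrices $\chi(\S)$, which is the assertion.

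I do not expect a deep obstacle here, as the result is essentially a block-diagonalization of a group-matrix structure; the step demanding the most care is the core computation identifying $A|_{W_\chi}$ with $\chi(\S)$, together with the surrounding bookkeeping. Specifically, I would verify that the vectors $e_{i,\chi}$ are linearly independent and exhaust the space (a dimension count, $m|H| = |V(\Gamma)|$), and keep the conventions consistent — left- versus right-translation, and whether $\chi$ or $\overline{\chi}$ is used in defining $e_{i,\chi}$ — so that the resulting block matrix matches the stated $(i,j)$-entry of $\chi(\S)$ rather than its transpose or complex conjugate.
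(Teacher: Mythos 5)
Your proposal is correct, but there is nothing in the paper to compare it against: the paper does not prove this theorem at all, quoting it from the preprint \cite{KovMMM14}, so your argument must stand (and does stand) on its own. The block-diagonalization you carry out is complete and accurate: semiregularity identifies the vertex set with $\{1,\ldots,m\}\times H$; the adjacency rule $v_i^x \sim v_j^y \iff yx^{-1}\in S_{ij}$ is the correct translation of the symbol; the substitution $y = sx$ in your key computation gives $(A e_{j,\chi})_{v_i^x} = (\chi(\S))_{ij}\,\chi(x)$, so $A e_{j,\chi} = \sum_i (\chi(\S))_{ij}\, e_{i,\chi}$ with the standard column convention; and the $m|H|$ vectors $e_{i,\chi}$ are independent (disjoint supports across orbits, linear independence of distinct characters within an orbit), so $A$ is similar to $\bigoplus_\chi \chi(\S)$ and the spectra match as multisets. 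One remark on the caveat you raise at the end: since $v_i \sim v_j^x \iff v_j \sim v_i^{x^{-1}}$, one has $S_{ji} = S_{ij}^{-1}$, hence each $\chi(\S)$ is Hermitian; so the transpose-versus-conjugate bookkeeping you flag can never change the spectrum, and the choice of $\chi$ versus $\overline{\chi}$ in defining $e_{i,\chi}$ merely permutes the blocks among themselves.
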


\noindent{\it Proof of Lemma \ref{L section}.} \ 
Let $\Gamma_{G/N}=\cay(G/N,\R),$  and $\R$ be writen as 
$\R = \{N r : r \in R\}$ for some $R \subset G$. It is clear that $N \cap R = \emptyset,$ in particular, $1 \notin R$. 
Observe that $N r^{-1} = (N r)^{-1} \in \R$. Therefore, if  $(N r)^{-1} \ne N r,$ then
we may assume that both $r$ and $r^{-1}$ are in $R$. 
Let $N r = (N r)^{-1}$. Then $r^2 \in N,$  hence $|\langle N,r \rangle| = 2|N|$. This together with $2 \nmid |N|$ imply that  
$r$ can be chosen to be an involution in $\langle N,r \rangle,$ and so $r^{-1} \in  R$.  
Therefore, we may choose $R$ so that $1 \notin R$ and $R^{-1} = R,$ and thus we have the 
Cayley graph $\Gamma_G=\cay(G,R)$.

Let $m = |G : N|,$ the index of $N$ in $G,$ and $T = \{ t_1,\ldots,t_m\}$ be a complete set of $N$-coset  representatives in $G$ 
such that $R \subseteq T$.  From now on every $x \in N$ will stand also for the permutation of $G$ acting as $g^x = g x, \, g \in G,$ 
and $N$ will stand for the group of all such permutations. Clearly, $N$ is a semiregular group of automorphisms of 
$\Gamma_G$ with $m$ orbits; and as vertices of $\Gamma_G,$  $t_1, \ldots, t_m$  represent all $N$-orbits. 

Let $\bR = (R_{ij})$ be the symbol of $\Gamma_G$ relative to $N$  and the $m$-tuple $(t_1,\ldots,t_m)$.  
By Theorem \ref{KMMM}, the eigenvalues of $\Gamma_G$ are equal to the eigenvalues of $\chi(\bR),$ where $\chi$ runs over the 
set of all irreducible characters of $N$. Therefore, it is sufficient to show that 
the spectrum of $\chi(\bR)$ is equal to the spectrum of $\Gamma_{G/N},$ where 
$\chi$ is the trivial character of $H$ (i.e., $\chi(x)=1$ for every $x \in H$). 
Since $\chi(\bR)_{ij} = |R_{ij}|,$ see \eqref{chiS}, the latter statement follows from the following equivalence:
\begin{equation}\label{iff}
\forall i,j \in \{1,\ldots,m\}: |R_{ij}| \le 1, \text{ and } |R_{ij}|=1 \iff N t_i \sim N t_j  \text{ in } \Gamma_{G/N}. 
\end{equation}

We may write, see \eqref{Sij}, 
\begin{equation}\label{Rij}
R_{ij} = \{ x \in N : t_i  \sim t_j x \text{  in } \Gamma_G \} 
          = \{ x \in N :  t_j x t_i^{-1}  \in R \}| =  N \cap t_j^{-1} R t_i.
\end{equation}
Using that $N \unlhd G,$ this gives that $|R_{ij}|=|N \cap t_j^{-1} R t_i|= |N t_j t_i^{-1} \cap R|$.   
As $R \subseteq T,$ $|N t_j t_i^{-1} \cap R| \le 1$. Furhermore, $|N t_j t_i^{-1} \cap R|=1$ if and only if 
$N t_j t_i^{-1} = N r$ for some $r \in R,$ or equivalently, $N t_j (N t_i)^{-1}  = N  t_j t_i \in \R$ holds in $G/N,$ or 
equivalently, $N t_j$ and $N t_i$ are adjacent in $\Gamma_{G/N}$. This completes the proof of \eqref{iff}.
\hfill $\square$ \medskip

It is well-known that the eigenvalues of a Cayley graph over an arbitrary group $G$ can be computed using the irreducible 
representations of $G$ (see \cite{DiaS81}). In this paper we will rather use Theorem \ref{KMMM}, and hence avoid 
the representation theory of non-abelian groups.  \medskip

In what follows, we write $y^x = x^{-1} y x$ for $x,y \in G,$ and $[x,y]$ will 
denote the \emph{commutator element,} i.e., $[x,y]=x^{-1}y^{-1}x y$.

\begin{lem}\label{L dic}
The group $Dic(E_{3^n} \times \Z_6)$ is in $\G_5$ for every $n \ge 0$. 
\end{lem}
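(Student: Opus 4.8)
The plan is to apply Theorem \ref{KMMM} with the abelian normal subgroup $A = E_{3^n}\times\Z_6$ acting as a semiregular automorphism group of an arbitrary Cayley graph over $G = Dic(A)$. First I would record the structural facts needed. Note $A \cong E_2 \times E_{3^{n+1}}$ is abelian with a unique involution $t$ and is Cayley integral by Theorem \ref{KS}; and $A \unlhd G$ has index $2$, so, acting by right translations, $A$ is semiregular with exactly the two orbits $A$ and $Ax$. From $x^2 = t$ and $a^x = a^{-1}$ one checks by direct computation that every element of $Ax$ squares to $t$, that $(ax)^{-1} = (at)x$, and that $xax^{-1} = a^{-1}$; these identities are all I will use.

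Next, fix a connection set $S$ with $|S|\le 5$ and split it as $S = S_0 \cup S_1$, where $S_0 = S\cap A$ and $S_1 = S\cap Ax = \{bx : b\in B\}$ for some $B\subseteq A$. The requirement $1\notin S = S^{-1}$ translates, via the identities above, into $S_0 = S_0^{-1}$ and $B = Bt$; in particular $B$ is a union of $\langle t\rangle$-cosets, so $|B| = |S_1|$ is even and $|B|\in\{0,2,4\}$. Taking $v_1 = 1$ and $v_2 = x$ as orbit representatives, I would compute the $2\times 2$ symbol $\S$ of $\cay(G,S)$ relative to $A$ and verify that, for each irreducible character $\chi$ of $A$,
\[
\chi(\S) = \begin{pmatrix}\alpha & \overline{\beta}\\ \beta & \alpha\end{pmatrix},\qquad \alpha = \sum_{a\in S_0}\chi(a),\quad \beta = \sum_{b\in B}\chi(b),
\]
where $\alpha$ is real because $S_0 = S_0^{-1}$, and the two off-diagonal entries are complex conjugates because the sets $\{a : a^{-1}\in B\}$ and $B$ are inverse to one another. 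Hence the eigenvalues contributed by $\chi$ are $\alpha\pm|\beta|$, and by Theorem \ref{KMMM} it suffices to show each of these lies in $\Z$.

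The value $\alpha$ is an eigenvalue of $\cay(A, S_0)$, which is integral since $A$ is Cayley integral; thus $\alpha\in\Z$ for every $\chi$, and everything reduces to showing $|\beta|\in\Z$. Here the key case distinction is on $\chi(t)\in\{1,-1\}$. If $\chi(t) = -1$ then each $\langle t\rangle$-coset $\{b,bt\}$ inside $B$ contributes $\chi(b)(1+\chi(t)) = 0$, so $\beta = 0$. If $\chi(t) = 1$ then $\chi$ is trivial on $\langle t\rangle$ and therefore factors through $A/\langle t\rangle \cong E_{3^{n+1}}$, so every value $\chi(a)$ is a cube root of unity; summing over the one or two $\langle t\rangle$-cosets comprising $B$ yields $\beta = 2\chi(b_1)$ or $\beta = 2\bigl(\chi(b_1)+\chi(b_2)\bigr)$, whence $|\beta| = 2$, or $|\beta| = 2\,|1+\zeta|$ with $\zeta = \chi(b_1^{-1}b_2)$ a cube root of unity, giving $|\beta|\in\{2,4\}$. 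In every case $|\beta|\in\Z$, so $\alpha\pm|\beta|\in\Z$ and $\cay(G,S)$ is integral; as $S$ was arbitrary with $|S|\le 5$, this gives $G\in\G_5$.

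The main obstacle, and the reason the bound is exactly $5$, lies in the subcase $|B| = 4$. A priori $|\chi(b_1)+\chi(b_2)|$ could equal $\sqrt 3$ (precisely when the ratio $\chi(b_1^{-1}b_2)$ is a primitive sixth root of unity), which would break integrality. The observation that rescues the argument is that on the only characters where $\beta\neq 0$, namely those with $\chi(t)=1$, all character values are cube roots of unity, so this bad ratio cannot occur. The same computation with three cosets ($|B|=6$, forcing $|S|\ge 6$) does produce the value $\sqrt 3$, which is exactly why these dicyclic groups drop out of $\G_k$ for $k\ge 6$.
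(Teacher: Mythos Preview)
Your proof is correct and uses the same core device as the paper: apply Theorem~\ref{KMMM} to the index-$2$ abelian subgroup $A=E_{3^n}\times\Z_6\cong E_{3^{n+1}}\times\Z_2$, obtain a $2\times 2$ symbol, and check that the resulting eigenvalues $\alpha\pm|\beta|$ are integers. The organization differs, however. The paper breaks into cases on $|S|$: for $|S|\le 3$ it argues directly that $\langle S\rangle$ is abelian (using that the unique involution $x^2$ lies in $Z(G)$), and only for $|S|=4,5$ does it compute symbols, splitting further on whether $S\cap A$ is empty. You instead treat all $|S|\le 5$ uniformly, and your two additional observations streamline the argument nicely: first, invoking Theorem~\ref{KS} to conclude at once that $\alpha=\chi(S_0)$ is an integer (as an eigenvalue of $\cay(A,S_0)$); second, noting that $B=Bt$ forces $|S_1|$ even, which both explains the bound $|S|\le 5$ structurally and makes the $\chi(t)=\pm1$ dichotomy for $\beta$ transparent. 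Your closing remark about why $|B|=6$ breaks integrality also anticipates the final step in the proof of Theorem~\ref{T main}.
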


\begin{proof} 
Let $G = Dic(E_{3^n} \times \Z_6)$ and $S \subseteq G$ of size $|S| \le 5$. We have to show that $\cay(G,S)$ is integral. This is clear if 
$|S| \le 2$. 

Let $|S|=3$. Let us write $G = P \rtimes \langle x \rangle,$ where $P \cong E_{3^{n+1}},$ $x$ is of order 
$4,$ and $u^x=u^{-1}$ for every $u \in P$. Notice that $x^2$ is the unique involution of $G$ which is in $Z(G)$. 
Therefore, $x^2 \in S$ and $\langle S \rangle$ is abelian.  This implies that $\cay(G,S)$ is integral. 

Let $|S|=4$. Let $H = \langle P,x^2 \rangle \cong E_{3^{n+1}} \times \Z_2$. 
If $S \subseteq H,$ then $\langle S \rangle \le H,$ and hence $\cay(G,S)$ is integral. 
Thus we may assume that $S$ contains two elements in the form $x u$ and $(x u)^{-1}=x^{-1} u$ for some $u \in H$.  
Let us consider the symbol $\S = (S_{ij})$ of $\cay(G,S)$ relative to $H$ and the pair $(1,x)$. 

Let $S \cap H = \emptyset$. Then $S = \{xu, x^{-1}u, xv, x^{-1}v\},$ where $u,v \in H$ and $u \ne v$.  
According to \eqref{Rij},  the subsets $S_{ij}$ are computed as: 
$S_{11} = H \cap S = \emptyset,$ 
$S_{12} = H \cap x^{-1}S = \{u,x^2u,v,x^2v \},$ 
$S_{21} = H \cap S x  = \{x^2 u^{-1},u^{-1},x^2 v^{-1},v^{-1} \},$ and 
$S_{22} = H \cap x^{-1} S x = \emptyset,$ i.e., 
$$
\S=\begin{pmatrix} \emptyset & \{u,x^2u,v,x^2 v\}  \\   \{x^2u^{-1},u^{-1},x^2 v^{-1},v^{-1}\} & \emptyset  \end{pmatrix}.
$$
For an irreducible character $\chi$ of $H,$ $\chi(\S)$ has eigenvalues  
$\pm (1+\chi(x^2))  |\chi(u)+\chi(v)|$. This is equal to $0$ if $\chi(x^2)=-1$. Otherwise, $\chi(x^2) = 1,$ and 
both $\chi(u)$ and $\chi(v)$ are complex $3^{rd}$ roots of unities,  showing that the eigenvalues are also integers 
in this case. 

Let $S \cap H = \{v,v^{-1}\}$. Then  
$$
\S=\begin{pmatrix} \{v,v^{-1}\} & \{u,x^2u\}  \\   \{u^{-1},x^2u^{-1}\} & \{v,v^{-1}\}  \end{pmatrix}.
$$
The two eigenvalues of $\chi(\S)$ are $\chi(v)+\chi(v^{-1}) \pm (1+\chi(x^2))$. These are also integers, and this completes the 
proof of the case when $|S|=4$. 

Let $|S|=5$. In this case $S$ must contain the unique involution $x^2$. Repeating the above analysis with the set 
$S \setminus \{x^2\},$ one can deduce that $\cay(G,S)$ is always integral. 
\end{proof}

\section{The classes $\mathbf{\G_k, \, k \ge 4}$}

A finite group $G$  is said to be \emph{minimal non-abelian} if all proper 
subgroups of $G$ are abelian. The following result is due to R\'edei \cite{Red47}: 

\begin{thm}\label{R}{\rm (R\'edei \cite{Red47})}   
Let $G$ be a minimal non-abelian $p$-group.  Then $G$ is one of the following groups:
\begin{enumerate}[(i)]
\item $Q_8;$
\item $\big\langle a, b \, | \, a^{p^m}=b^{p^n}=1, a^b=a^{1+p^{m-1}} \big\rangle,$  where $m \ge 2$ (metacyclic);
\item $\big\langle a, b, c \, | \, a^{p^m}=b^{p^n}=c^p=1, [a, b]=c, [c,a]=[c,b]=1  \big\rangle,$  where 
$m+n \ge 3$ if $p=2$ (non-metacyclic).
\end{enumerate}
\end{thm}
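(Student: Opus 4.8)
The plan is to recover the standard structural backbone of a minimal non-abelian $p$-group and then read off the three presentations via a metacyclic versus non-metacyclic dichotomy. First I would show that $G$ is $2$-generated: choosing non-commuting $a,b$, the subgroup $\langle a,b\rangle$ is non-abelian, hence equals $G$. Next I would prove $\Phi(G)=Z(G)$ and $G/Z(G)\cong\Z_p\times\Z_p$. The maximal subgroups $\langle a,\Phi(G)\rangle$ and $\langle b,\Phi(G)\rangle$ are proper, hence abelian, so each centralizes $\Phi(G)$; since they generate $G$ this gives $\Phi(G)\le Z(G)$. Conversely, a central element outside $\Phi(G)$ could be taken as one member of a generating pair, forcing $G$ abelian, so $Z(G)\le\Phi(G)$. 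As $G$ is $2$-generated, $G/\Phi(G)\cong\Z_p^2$, whence $G'\le Z(G)$; bilinearity of the commutator map together with $a^p,b^p\in Z(G)$ then yields $[a,b]^p=1$ and $G'=\langle[a,b]\rangle\cong\Z_p$.

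Having fixed this, I would set $c=[a,b]$, so $c$ is central of order $p$ and $a^b=ac$; because $G'\le Z(G)$, the element $[a,b]$ is unchanged when the generators are altered modulo $Z(G)$, which gives freedom to renormalize. I would decompose the abelian $2$-generated quotient $G/G'\cong\Z_{p^m}\times\Z_{p^n}$ with $m\ge n\ge 1$, choosing $a,b$ so that their images generate the cyclic factors. The decisive question is then whether $c\in\langle a\rangle$ (the metacyclic case) or not.

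If, after adjusting generators, $c$ is a power of $a$, then $a^b=ac\in\langle a\rangle$ shows $\langle a\rangle\trianglelefteq G$ with cyclic quotient, so $G$ is metacyclic; normalizing gives $c=a^{p^{m-1}}$ and $a^b=a^{1+p^{m-1}}$. It then remains to lift a generator of $G/\langle a\rangle$ to an element $b$ with $b^{p^n}=1$, which is precisely the splitting of the extension $1\to\langle a\rangle\to G\to\langle\bar b\rangle\to 1$; this succeeds in all cases except the single non-split group $Q_8$, yielding items (i) and (ii). If instead $c\notin\langle a\rangle$ (and symmetrically $c\notin\langle b\rangle$), then $\langle a\rangle\cap G'=\langle b\rangle\cap G'=1$, so $a^{p^m}=b^{p^n}=1$, and the relations $[a,b]=c$, $c^p=1$, $[c,a]=[c,b]=1$ present $G$, giving (iii); the bound $m+n\ge 3$ for $p=2$ merely excludes the order-$8$ coincidence with the metacyclic $D_8$ already covered by (ii), while for odd $p$ the minimal case is the extraspecial group of exponent $p$.

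The closing task is to verify that each listed group really is minimal non-abelian (every maximal subgroup abelian) and that no two families overlap. I expect the main obstacle to be the generator normalization in the metacyclic case, namely proving that $b$ can be chosen with $b^{p^n}=1$ except precisely for $Q_8$. This forces me to control $p$-th power expansions modulo $G'$ via the collection formula and to pin down the cohomological splitting obstruction, together with the companion bookkeeping that isolates $Q_8$ as the sole non-split exception and handles the $p=2$ parity condition separating (ii) from (iii).
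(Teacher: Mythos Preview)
The paper does not prove this theorem; it is quoted from R\'edei \cite{Red47} without proof and used as a black box (to derive Corollary~\ref{Rc}). So there is no ``paper's own proof'' to compare against.

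That said, your sketch is the standard route to R\'edei's classification and is structurally sound: the chain $G=\langle a,b\rangle$, $\Phi(G)=Z(G)$, $G/Z(G)\cong\Z_p^2$, $G'=\langle c\rangle\cong\Z_p$ is exactly right, and the metacyclic/non-metacyclic split according to whether $c$ can be absorbed into $\langle a\rangle$ is the correct organizing dichotomy. Two places deserve more care when you write it out. First, in the metacyclic branch you should not fix $m\ge n$ in advance, since in family~(ii) the roles of $a$ and $b$ are asymmetric and the theorem allows either $m\ge n$ or $m<n$; what matters is that the \emph{normal} cyclic factor has order $p^m\ge p^2$. Second, the splitting step---choosing $b$ with $b^{p^n}=1$---is precisely where $Q_8$ separates off, and for $p=2$ this requires a genuine argument (for odd $p$ the Schur--Zassenhaus-type normalization via the collection formula is routine, but for $p=2$ one must check that the obstruction vanishes except in the single order-$8$ case). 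You have correctly flagged this as the crux, so just be sure the eventual write-up actually discharges it rather than asserting it.
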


\begin{cor}\label{Rc}
The minimal non-abelian groups of exponent at most $4$ are the following:  
\begin{enumerate}[(i)]
\item $Q_8;$
\item  $D_8=\big\langle a,b \, \mid \, a^4=b^2=1, a^b=a^{-1} \big\rangle,$ \\ 
$H_2 = \big\langle a,b \, \mid a^4=b^4=1, a^b=a^{-1}  \big\rangle$ (metacyclic);
\item $H_{16}=\big\langle a, b, c \, | \, a^4=b^2=c^2=1, [a, b]=c, [c,a]=[c,b]=1  \big\rangle ,$ \\ 
$H_{32} =\big\langle a, b, c \, | \, a^4=b^4=c^2=1, [a, b]=c, [c,a]=[c,b]=1  \big\rangle ,$ \\
$H_{27} = \big\langle a,b,c \, \mid \, a^3=b^3=c^3=1, [a,b] = c, [c,a] = [c,b] = 1 \big\rangle$ (non-metacyclic).
\end{enumerate}
\end{cor}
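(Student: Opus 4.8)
The plan is to read the corollary off from R\'edei's Theorem \ref{R} by first reducing to $p$-groups and then restricting the parameters so that the exponent stays at most $4$; the only part needing genuine computation will be the converse check that the surviving presentations really have exponent at most $4$.

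\emph{Reduction to $p$-groups.} First I would note that a non-abelian group cannot have exponent $1$ or $2$, since every group of exponent at most $2$ is abelian. Hence a minimal non-abelian group $G$ of exponent at most $4$ has exponent exactly $3$ or $4$. If the exponent is $3$ then every non-identity element has order $3$ and $G$ is a $3$-group; if the exponent is $4$ then every element has order dividing $4$ and $G$ is a $2$-group. In both cases Theorem \ref{R} applies, so it remains to run through its three families for $p\in\{2,3\}$ and keep exactly those presentations whose exponent does not exceed $4$.

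\emph{Restricting the parameters.} In each family the generator $c$ (when present) has order $p\le 3$, so the exponent condition only constrains the orders $p^m$ of $a$ and $p^n$ of $b$, forcing $p^m\le 4$ and $p^n\le 4$. For $p=3$ the metacyclic family (ii) needs $m\ge 2$, i.e.\ $a$ of order at least $9$, and is therefore excluded, while the non-metacyclic family (iii) has no constraint linking $m$ and $n$, so $m=n=1$ survives and yields $H_{27}$. For $p=2$, family (i) is $Q_8$; in family (ii) the bound forces $m=2$ (so that $a^b=a^{1+2}=a^{-1}$) and $n\in\{1,2\}$, giving $D_8$ and $H_2$; in family (iii) the bound gives $m,n\le 2$, which together with $m+n\ge 3$ and the symmetry $a\leftrightarrow b$ (under which $c=[a,b]$ is sent to $c^{-1}=c$) leaves only the pairs $(m,n)=(2,1)$ and $(2,2)$, yielding $H_{16}$ and $H_{32}$.

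\emph{Verifying the exponent of the listed groups.} Conversely, each of these groups occurs on R\'edei's list and is therefore minimal non-abelian, so the remaining and main point is to confirm that its exponent is really at most $4$, i.e.\ that bounding the orders of the generators bounds the orders of \emph{all} elements. Here I would use that each listed group has nilpotency class $2$ with commutator subgroup of order $p$, so the class-$2$ power identity $(xy)^n=x^ny^n[y,x]^{\binom{n}{2}}$ is available: raising an arbitrary element to the $p^2$-th power (the $4$-th power when $p=2$, the $3$-rd power when $p=3$) kills every generator factor and leaves a central commutator raised to a multiple of $p$, which is again trivial. This shows the exponent is at most $4$ and completes the argument. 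The forward parameter analysis is immediate; the only step I expect to require care is this class-$2$ exponent computation, and even there the bounded class and order-$p$ commutators make it routine.
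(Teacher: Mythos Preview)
Your argument is correct and is exactly what the paper intends: the corollary is stated there without proof, as an immediate consequence of R\'edei's Theorem~\ref{R}, and you have supplied the routine parameter restriction and class-$2$ exponent check that the paper leaves to the reader. One cosmetic point: in the last paragraph the phrase ``the $p^2$-th power'' is only accurate for $p=2$; for $p=3$ you (correctly, as your parenthetical makes clear) raise to the $p$-th power.
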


\begin{lem}\label{L p}
Every $p$-group in $\G_k$ is Cayley integral if $k \ge 4$. 
\end{lem}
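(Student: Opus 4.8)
The plan is to reduce, via the order constraint of Lemma~\ref{L basic}(i) and the list in Corollary~\ref{Rc}, to a handful of minimal non-abelian groups, and then to eliminate the unwanted ones by exhibiting a small non-integral Cayley graph for each, computed through Theorem~\ref{KMMM} exactly as in the proof of Lemma~\ref{L dic}. So I would fix a $p$-group $G\in\G_k$ with $k\ge 4$. By Lemma~\ref{L basic}(i) every element of $G$ has order in $\{1,2,3,4,6\}$, so for a $p$-group necessarily $p\in\{2,3\}$ (if $p\ge 5$ then $G=1$, which is trivially Cayley integral); moreover $G$ has exponent $3$ when $p=3$ and exponent dividing $4$ when $p=2$. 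If $G$ is abelian, then $G\cong E_{3^m}$ or $G\cong E_{2^n}\times\Z_4^m$, both Cayley integral by Theorem~\ref{KS}, so from now on I assume $G$ is non-abelian and pass to a minimal non-abelian subgroup $M\le G$. By Lemma~\ref{L basic}(ii) we have $M\in\G_k\subseteq\G_4$, and since $M$ has exponent at most $4$, Corollary~\ref{Rc} forces $M\cong H_{27}$ when $p=3$ and $M\in\{Q_8,D_8,H_2,H_{16},H_{32}\}$ when $p=2$.

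The heart of the argument is to show that every candidate other than $Q_8$ lies outside $\G_4$. For each such $M$ I would pick an abelian normal subgroup $H$ (of index $2$, or of index $3$ for $H_{27}$), choose an inverse-closed $S\subseteq M$ with $|S|\le 4$, write down the symbol $\S$ of $\cay(M,S)$ relative to $H$ and a transversal, and use Theorem~\ref{KMMM} to locate a character $\chi$ of $H$ for which $\chi(\S)$ has an irrational eigenvalue. Concretely, for $H_{27}=\langle a,b,c\rangle$ with $H=\langle a,c\rangle\cong E_9$ the set $S=\{a,a^{-1},b,b^{-1}\}$ gives, for $\chi$ with $\chi(a)=1$ and $\chi(c)$ a primitive cube root of unity, the matrix $\left(\begin{smallmatrix}2&1&1\\1&-1&1\\1&1&-1\end{smallmatrix}\right)$ with characteristic polynomial $(\lambda+2)(\lambda^2-2\lambda-2)$, hence the irrational eigenvalues $1\pm\sqrt3$; and for $H_2=\langle a,b\rangle$ with $H=\langle a,b^2\rangle$ the set $S=\{b,b^{-1},ab,(ab)^{-1}\}$ yields the eigenvalue $2\sqrt2$. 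The case $D_8$, where $\{b,ab\}$ already produces the $8$-cycle $C_8$, and the cases $H_{16},H_{32}$ are handled in the same spirit.

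I expect these eigenvalue computations to be the main obstacle, and in particular the choice of $S$ to be delicate: the fully symmetric candidates often cancel and leave only integer eigenvalues (for example $\{a,a^{-1},b,b^{-1}\}$ is integral for $H_2$), and irrationality is forced only when $S$ contains order-$4$ elements that collapse to involutions in the relevant character sum, so that an inverse pair reinforces instead of cancelling. Verifying that such an $S$ with $|S|\le 4$ exists for each of $D_8,H_2,H_{16},H_{32}$ and $H_{27}$ is the technical core of the proof.

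Granting the five exclusions, the conclusion is immediate. When $p=3$ we obtain $M\cong H_{27}\notin\G_4$, a contradiction, so $G$ must be abelian and we are done. When $p=2$ every minimal non-abelian subgroup of $G$ is isomorphic to $Q_8$, so Janko's theorem \cite[Corollary~2.4]{Jan07} gives $G\cong Q_{2^m}\times E_{2^n}$; the exponent bound $2^{m-1}\le 4$ then forces $m=3$, i.e.\ $G\cong Q_8\times E_{2^n}$, which is Cayley integral by Theorem~\ref{ABM}.
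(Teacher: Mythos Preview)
Your proposal is correct and follows essentially the same route as the paper: reduce to a minimal non-abelian subgroup, use Corollary~\ref{Rc} to list the candidates, exclude all but $Q_8$ by exhibiting a Cayley graph with $|S|\le 4$ and an irrational eigenvalue via Theorem~\ref{KMMM}, and finish with Janko's result. The only noteworthy difference is that the paper disposes of $H_2$ by the quotient argument $H_2/\langle b^2\rangle\cong D_8\notin\G_2$ together with Lemma~\ref{L basic}(iii), whereas you propose a direct symbol computation for $H_2$; both work, and your $H_{27}$ and $D_8$ computations coincide exactly with the paper's.
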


\begin{proof}
Fix a number $k \ge 4$ and let $G \in \G_k$ be a $p$-group. 
Lemma \ref{L basic}.(i) gives that $p=2$ or $3,$ and the lemma follows at once when $G$ is abelian. 
Assume that $G$ is non-abelian. We have to prove that $G \cong E_{2^n} \times Q_8$ for some $n \ge 0$. 
In view of \cite[Corollary 2.4]{Jan07} (see the introduction), it is sufficient to show that every minimal non-abelian 
subgroup of $G$ is isomorphic to $Q_8$.  

Let $N$ be a minimal non-abelian subgroup of $G$. Note that, $N \in \G_k$ because of Lemma \ref{L basic}.(ii).  
If $p=3,$ then $N \cong H_{27},$ see Corollary \ref{Rc}. 
We exclude this possibility by showing that the graph $\Gamma = \cay(H_{27},\{a,a^{-1},b,b^{-1}\})$ is non-integral. 
Let $H = \langle a,c \rangle \cong E_9,$ and $\S = (S_{ij})$ be the symbol of $\Gamma$ relative to  
$H$ and the triple $(1, b,b^{-1})$. Then compute that   
$$
S_{11} = \{ a,a^{-1} \}, \; S_{22}=\{ac,(ac)^{-1}\}, \; S_{33}= \{ac^{-1},a^{-1}c \},
$$
and $S_{ij}=\{1\}$ if $i \ne j$. Let $\chi$ be the irreducible character of $H$ defined by $\chi(a) = 1$ and 
$\chi(c) = e^{2\pi i/3}$. Then 
$$
\chi(\S) = \begin{pmatrix}  2 & 1 & 1 \\ 1 & -1 & 1 \\  1 & 1 & -1 \end{pmatrix}.
 $$
The eigenvalues of $\chi(\S)$ are $-2$ and $1 \pm \sqrt{3},$ and as these are also eigenvalues of $\Gamma,$ see Theorem \ref{KMMM}, 
$\Gamma$ is indeed non-integral.

Thus $p=2,$ and $N$ is isomorphic to one of the following groups:
$$
Q_8, \; D_8, \; H_2, \; H_{16} \text{ and } H_{32}.
$$

We complete the proof by excluding the last four groups. \medskip

$\bullet$ $D_8:$  Let $D_8 = \langle a,b \mid a^4=b^2=1, bab=a^{-1} \rangle$. It is easy to see that $\cay(H,\{ab,b\})$  
is  isomorphic to an $8$-cycle, which is not integral. 
We actually obtained that $D_8 \notin \G_2$. \medskip

$\bullet$ $H_{2}:$  Notice that $\langle b^2  \rangle \unlhd H_2,$ and $H_2/\langle b^2  \rangle \cong D_8$. 
As $D_8 \notin \G_2,$ this and Lemma \ref{L basic}.(iii) yield that $H_2 \notin \G_k$.   \medskip

$\bullet$ $H_{16}:$ In fact, we show that $H_{16} \notin \G_3$. Consider the graph 
$\cay(H_{16},\{ba,ba^{-1}c,b\})$. Compute its symbol $\S$ relative to  
$H = \langle a,c \rangle \cong \Z_4 \times \Z_2$ and the pair $(1,b):$
$$
\S = \begin{pmatrix}  \emptyset & \{a,a^{-1}c,1\} \\ \{ac,a^{-1},1\} & \emptyset \end{pmatrix} \text{ and } 
\chi(\S) = \begin{pmatrix} 0 & 2i+1 \\ -2i+1 & 0 \end{pmatrix}, 
$$
where $\chi$ is defined by $\chi(a)=i$ (the complex imaginary unit), and $\chi(c)=-1$. 
The eigenvalues of $\chi(\S)$ are $\pm \sqrt{5},$ and so $H_{16} \notin \G_k$ if $k \ge 3$.  \medskip

$\bullet$ $H_{32}:$  Consider the graph $\cay(H_{32},\{ba,b^{-1}a^{-1}c,b,b^{-1}\})$. Compute its symbol $\S$ relative 
to  $H = \langle a,b^2,c \rangle \cong \Z_4 \times E_4$ and the pair $(1,b):$ 
$$
\S = \begin{pmatrix}  \emptyset & \{a,b^2 a^{-1}c,1,b^2\} \\ \{b^2 a c,a^{-1},b^2,1\} & \emptyset \end{pmatrix} \text{ and } 
\chi(\S) = \begin{pmatrix} 0 & 2i+2 \\ -2i+2 & 0 \end{pmatrix}, 
$$
where $\chi$ is defined by $\chi(a)=i$ (the complex imaginary unit), $\chi(b^2)=1$ and $\chi(c)=-1$. 
The eigenvalues of $\chi(\S)$ are $\pm 2\sqrt{2},$ and so $H_{16} \notin \G_k$ if $k \ge 4$. 
\end{proof}

\begin{cor}\label{C nil}
Every nilpotent group in $\G_k$ is Cayley integral if $k \ge 4$. 
\end{cor}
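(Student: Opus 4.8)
The plan is to reduce the statement to the $p$-group case already settled in Lemma~\ref{L p}, exploiting that a finite nilpotent group is the internal direct product of its Sylow subgroups. First I would invoke Lemma~\ref{L basic}.(i): since every element of a group $G \in \G_k$ has order in $\{1,2,3,4,6\}$, the only primes dividing $|G|$ are $2$ and $3$. Hence a nilpotent $G \in \G_k$ decomposes as $G \cong P \times Q$, where $P$ is its Sylow $2$-subgroup and $Q$ its Sylow $3$-subgroup. By Lemma~\ref{L basic}.(ii) both $P$ and $Q$ lie in $\G_k$, and therefore by Lemma~\ref{L p} both factors are Cayley integral.

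Next I would pin down the two factors using the known classifications. The group $Q$ is a Cayley integral $3$-group; its element orders lie in $\{1,3\}$, so it has exponent $3$, and the argument inside the proof of Lemma~\ref{L p} (which excludes $H_{27}$, the only minimal non-abelian $3$-group of exponent $3$, via Corollary~\ref{Rc}) forces $Q$ to be abelian. Combined with Theorem~\ref{KS} this gives $Q \cong E_{3^m}$ for some $m \ge 0$. For the other factor, Theorems~\ref{KS} and \ref{ABM} say that the Cayley integral $2$-group $P$ is either abelian of the form $E_{2^n} \times \Z_4^j$, or non-abelian of the form $Q_8 \times E_{2^n}$.

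The key point, and the only place where something could genuinely go wrong, is that the class of Cayley integral groups is \emph{not} closed under direct products: for instance $\Z_4 \times \Z_3 \cong \Z_{12}$ is not Cayley integral, indeed not even in $\G_2$, because it contains an element of order $12$. I would show that precisely this obstruction cannot occur here. If $Q \ne 1$, then $Q$ contains an element of order $3$; were $P$ to contain an element of order $4$ (which happens exactly when $P$ involves a $\Z_4$ or a $Q_8$ factor, i.e.\ when $P \not\cong E_{2^n}$), the product of the two would have order $12$ in $G$, contradicting Lemma~\ref{L basic}.(i). Hence $Q \ne 1$ forces $P \cong E_{2^n}$, and then $G \cong E_{2^n} \times E_{3^m}$ is abelian Cayley integral by Theorem~\ref{KS}.

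Finally, if $Q = 1$ then $G = P$ is itself a Cayley integral $2$-group by Lemma~\ref{L p}, and nothing remains to prove. Thus in every case $G$ is Cayley integral. I expect the only non-routine ingredient to be the order-$12$ argument of the third paragraph, which is exactly what compensates for the failure of direct-product closure; the remainder is a straightforward assembly of Lemma~\ref{L p} with the Sylow decomposition of nilpotent groups.
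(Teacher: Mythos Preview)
Your argument is correct. The paper states this as a bare corollary of Lemma~\ref{L p} with no proof, and what you have written is exactly the natural expansion: decompose the nilpotent $G$ as $P\times Q$ with $P$ a $2$-group and $Q$ a $3$-group (the only primes allowed by Lemma~\ref{L basic}(i)), apply Lemma~\ref{L p} to each factor, and use the order-$12$ obstruction to rule out the coexistence of a nontrivial $Q$ with an element of order $4$ in $P$. Your identification of the order-$12$ argument as the one genuine step---compensating for the failure of Cayley integrality under direct products---is spot on, and the case split ($Q=1$ versus $Q\ne 1$) cleanly finishes it.
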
  

It is worth to derive the following characterization of non-abelian $2$-groups in $\G_3$.

\begin{prop}\label{P 2-Grp}
Let $G$ be a non-abelian $2$-group of exponent $4$. Then $G \in \G_3$ if and only if every minimal normal subgroup of $G$ 
is isomorphic to $Q_8,\, H_2$ or $H_{32}$. 
\end{prop}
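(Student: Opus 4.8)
My plan is to prove the two implications separately, reading "minimal normal'' as \emph{minimal non-abelian} (as in the introduction), and to use throughout that $\cay(G,S)$ is a disjoint union of copies of $\cay(\langle S\rangle,S)$, so $\cay(G,S)$ is integral exactly when $\cay(\langle S\rangle,S)$ is. For the direction $G\in\G_3\Rightarrow$ (structure), I would argue as follows. By Lemma \ref{L basic}.(ii) every subgroup of $G$ again lies in $\G_3$, and since $G$ is a $2$-group of exponent $4$, each minimal non-abelian subgroup has exponent at most $4$; by Corollary \ref{Rc} it is therefore isomorphic to one of $Q_8,D_8,H_2,H_{16},H_{32}$ (the group $H_{27}$ being excluded as a non-$2$-group). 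The computations already in the proof of Lemma \ref{L p} give $D_8\notin\G_2$, hence $D_8\notin\G_3$ since $\G_3\subseteq\G_2$, and $H_{16}\notin\G_3$. As any subgroup isomorphic to $D_8$ or $H_{16}$ is automatically minimal non-abelian, no such subgroup can occur, leaving only $Q_8,H_2,H_{32}$.

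For the converse I would prove the sharper structural statement that the hypothesis forces \emph{every} involution of $G$ into $Z(G)$. The engine is the following claim, which I regard as the crux of the proof: \emph{a $2$-group of exponent $4$ having a non-central involution contains a subgroup isomorphic to $D_8$ or to $H_{16}$}. Granting this, assume every minimal non-abelian subgroup of $G$ is isomorphic to $Q_8,H_2$ or $H_{32}$; a subgroup isomorphic to $D_8$ or $H_{16}$ would be minimal non-abelian, so $G$ contains none, and the contrapositive of the claim makes every involution of $G$ central. Now take any symmetric $S$ with $1\notin S$ and $|S|\le 3$: its self-inverse members are central involutions, and the remaining members occur in inverse pairs $\{s,s^{-1}\}$, so all elements of $S$ commute. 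Thus $\langle S\rangle$ is an abelian $2$-group of exponent at most $4$, i.e.\ $\langle S\rangle\cong E_{2^a}\times\Z_4^b$, which is Cayley integral by Theorem \ref{KS}; hence $\cay(\langle S\rangle,S)$, and therefore $\cay(G,S)$, is integral and $G\in\G_3$.

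It remains to establish the claim, and this is where I expect the real work to be. Given a non-central involution $t$, I would pick $g$ of \emph{least order} with $[g,t]\ne 1$. If $o(g)=2$ then $gt$ cannot be an involution (that would force $g$ and $t$ to commute), so $o(gt)=4$ by exponent $4$ and $\langle g,t\rangle\cong D_8$. If $o(g)=4$ then minimality makes every involution commute with $t$, in particular $g^2$ and the conjugate $t^g$. Writing $c=[g,t]=t^g t$, the element $c$ is an involution (a product of two distinct commuting involutions), and the decisive computation is $c^g=(t^g t)^g=t^{g^2}t^g=t\,t^g=t^g t=c$, using $t^{g^2}=t$ and $t^g t=t t^g$; together with $ct=tc$ this shows $c\in Z(\langle g,t\rangle)$. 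If $c=g^2$ one checks $[g,t]=g^2$ forces $g^t=g^{-1}$, so $\langle g,t\rangle\cong D_8$; otherwise $c$ is a central involution outside $\langle g\rangle$, and the relations $g^4=t^2=1$, $[g,t]=c$ with $c$ central of order $2$ identify $\langle g,t\rangle$ with $H_{16}$. The single delicate point is the centrality of $c$, and the minimal choice of $g$—which is precisely what guarantees that all involutions, hence $g^2$ and $t^g$, commute with $t$—is exactly the leverage that makes the computation $c^g=c$ go through.
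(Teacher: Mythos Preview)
Your proof is correct and shares the paper's overall architecture: the ``only if'' direction is identical, and for ``if'' you, like the paper, reduce to showing that every involution of $G$ is central and then observe that any symmetric $S$ with $|S|\le 3$ generates an abelian group. Where you diverge is in the proof of the structural claim that a non-central involution forces a $D_8$ or $H_{16}$ subgroup. The paper takes an arbitrary $x$ with $[t,x]\ne 1$, sets $H=\langle t,x\rangle$, argues $x^2\in Z(H)$ and $|H|\in\{8,16\}$, then in the order-$16$ case invokes the list of minimal non-abelian groups and finishes with a semidirect-product analysis of $Q_8\rtimes\langle t\rangle$ to locate a $D_8$. Your route---choosing $g$ of \emph{least} order with $[g,t]\ne 1$ and computing directly that $c=[g,t]$ is a central involution in $\langle g,t\rangle$---is more self-contained: it avoids both the order count on $H$ and the $Q_8\rtimes\langle t\rangle$ endgame, at the cost of the commutator calculation (which, as you note, hinges on minimality giving $[g^2,t]=[t^g,t]=1$). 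One small point worth making explicit: in the case $c\ne g^2$ you should justify $|\langle g,t\rangle|=16$ rather than merely exhibiting the $H_{16}$ relations; this is immediate since $c\notin\langle g\rangle$ gives $|\langle g,c\rangle|=8$, and $t\notin\langle g,c\rangle$ because $t$ fails to commute with $g$ while $\langle g,c\rangle$ is abelian.
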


\begin{proof}
As none of $D_8$  and $H_{16}$ is in $\G_3$ (see the above proof), the ``only if'' part follows immediately from 
this and Lemma \ref{L basic}.(ii). 

For the ``if'' part, assume that no subgroup of $G$ is isomorphic to $D_8$ or $H_{16}$. 
It is sufficient to prove that every involution of $G$ is in the center $Z(G)$. It is easy to deduce from this that 
$\langle S \rangle$ is abelian for every inverse-closed subset $S \subset G$ with $1 \notin S$ and $|S| \le 3,$ 
and hence that $G \in \G_3$. 

Assume, towards a contradiction, that $[t,x] \ne 1$ for 
some involution $t$ and element $x$ in $G,$ and let $H=\langle t,x \rangle$. Clearly, $t \notin Z(H)$. 
Since $H \not\cong D_8,$ $x$ must be of order $4$. Also, $x^2 \in Z(H),$ hence $H/\langle x^2 \rangle$ is generated by 
two involutions. It follows that $|H| =8$ or $16$. In the first case, since $H$ is non-abelian, $H \cong Q_8$. 
This contradicts that $t \notin Z(H)$. Therefore, $|H| = 16$. 
If $H$ is minimal non-abelian, then $H \cong H_2$ or $H_{16}$. 
Both cases are impossible, every involution of $H_2$ in is $Z(H_2),$ while the involution $t \notin Z(H),$ and 
$H \not\cong H_{16}$ because of one of the initial assumptions. Thus $H$ contains a non-abelian subgroup of order $8,$ 
say $Q$. Then $Q \cong Q_8,$ and since $t \notin Z(H),$ $t \notin Q,$ and 
$H =   Q \rtimes \langle t \rangle$  is a non-trivial semidirect product.  There is an element $y \in Q$ such that $y^t \ne y$. 
Clearly, $y$ is of order $4$. If $y^t = y^{-1},$ then $\langle y,t \rangle \cong D_8,$ a contradiction. If $y^t \ne y^{-1},$ 
then putting $z = y y^t,$ we find that $z$ is of order $4,$ and $z^t = (y y^t)^t = y^t y = z^{-1}$. Thus 
$\langle z,  t \rangle \cong D_8,$ a contradiction. This completes the proof of the proposition. 
\end{proof}

Now, we return to the classes $\G_k, \, k \ge 4$.

\begin{lem}\label{L normal}
Suppose that $G \in \G_k, \, k \ge 4,$ and $3 \mid |G|$. Then $G$ has a normal Sylow $3$-subgroup. 
\end{lem}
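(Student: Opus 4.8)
The plan is to determine the arithmetic of $G$, show a Sylow $3$-subgroup is elementary abelian, and then run a minimal-counterexample argument that produces a forbidden subgroup carrying a non-integral Cayley graph of valency $4$. First, by Lemma~\ref{L basic}.(i) every element order lies in $\{1,2,3,4,6\}$, so $|G|=2^{a}3^{b}$ and a Sylow $3$-subgroup $P$ has exponent $3$. If $P$ were non-abelian it would contain a minimal non-abelian subgroup, which by Corollary~\ref{Rc} would have to be $H_{27}$ (the only minimal non-abelian group of exponent $3$); but the proof of Lemma~\ref{L p} shows $\cay(H_{27},\{a,a^{-1},b,b^{-1}\})$ is non-integral, so $H_{27}\notin\G_k$, contradicting Lemma~\ref{L basic}.(ii). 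Hence $P\cong E_{3^{m}}$ is elementary abelian.

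Now suppose, for contradiction, that $P$ is not normal, and choose $H\le G$ of least order subject to $3\mid|H|$ and $H$ having a non-normal Sylow $3$-subgroup. Then $H\in\G_k$ and, by the first paragraph, the Sylow $3$-subgroups of $H$ are elementary abelian. I would first strip $H$ down: the normal closure of one Sylow $3$-subgroup contains them all, so it is non-$3$-closed and, by minimality, equals $H$; thus $H$ is generated by its Sylow $3$-subgroups. Likewise $O_{3}(H)$ is an odd abelian normal subgroup, so by Lemma~\ref{L section} the quotient $H/O_{3}(H)\in\G_k$ is again non-$3$-closed, and minimality forces $O_{3}(H)=1$. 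Being a $\{2,3\}$-group, $H$ is solvable (Burnside), so $F(H)=O_{2}(H)$ is nontrivial and self-centralizing; by Lemma~\ref{L p} together with Theorems~\ref{KS} and~\ref{ABM}, $O_{2}(H)$ is a Cayley integral $2$-group, i.e.\ $E_{2^{n}}\times\Z_4^{m}$ or $Q_8\times E_{2^{n}}$.

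The next step is to identify $H$. Fix an element $u$ of order $3$; since $C_{H}(O_{2}(H))\le O_{2}(H)$ we have $u\notin C_{H}(O_{2}(H))$, so $u$ acts nontrivially on $O_{2}(H)$. If $O_{2}(H)$ is abelian, then $u$ acts nontrivially already on $\Omega_{1}(O_{2}(H))$ (reduction modulo $2$ has a $2$-group kernel), so some $2$-dimensional irreducible $\mathbb{F}_{2}\langle u\rangle$-submodule $V$ gives $\langle V,u\rangle\cong A_4$; this subgroup is non-$3$-closed, so minimality forces $H\cong A_4$. If $O_{2}(H)=Q_8\times E_{2^{n}}$, then $u$ acting nontrivially on $\Omega_{1}(O_{2}(H))$ would again yield a proper $A_4$, so $u$ centralizes $\Omega_{1}(O_{2}(H))$; a coprime-action (Maschke) argument, applied to $O_{2}(H)$ modulo the central involution of its $Q_8$-factor, then produces a $u$-invariant copy of $Q_8$, whence $\langle Q_8,u\rangle\cong SL(2,3)$ and minimality gives $H\cong SL(2,3)$. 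Thus $H$ is isomorphic to $A_4$ or to $SL(2,3)$, and the larger transitive possibilities such as $S_4$ or $GL(2,3)$ are automatically excluded, since they contain $D_8$ or elements of order $8$, against Lemma~\ref{L basic}.

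Finally I would contradict $H\in\G_k\subseteq\G_4$ by exhibiting valency-$4$ non-integral Cayley graphs via Theorem~\ref{KMMM}. For $A_4$, taking a $3$-cycle $a$ and two distinct involutions $t,t'$ and computing the symbol relative to the semiregular subgroup $V_4$, the graph $\cay(A_4,\{a,a^{-1},t,t'\})$ has eigenvalues $\tfrac{-1\pm\sqrt{17}}{2}$; for $SL(2,3)$, with $i$ of order $4$ and $u$ of order $3$, the graph $\cay(SL(2,3),\{i,i^{-1},u,u^{-1}\})$ has eigenvalues $\tfrac{1\pm\sqrt{17}}{2}$ (seen either from its $3$-dimensional irreducible representation, or from Theorem~\ref{KMMM} with the semiregular $\langle i\rangle$). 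Both are non-integral, so neither $A_4$ nor $SL(2,3)$ lies in $\G_4$, a contradiction; hence $P\unlhd G$. I expect the main obstacle to be the structural identification in the third paragraph—keeping the minimal-counterexample reduction tight enough that the self-centralizing Fitting subgroup and the coprime order-$3$ action are genuinely forced down to $E_4$ or $Q_8$ (in particular, the Maschke untwisting in the $Q_8\times E_{2^{n}}$ case); the two eigenvalue computations are then routine.
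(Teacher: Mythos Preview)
Your argument is essentially correct and lands on exactly the two obstructions the paper uses, $A_4$ and $Q_8\rtimes\Z_3\cong SL(2,3)$, with the same valency-$4$ Cayley graphs and the same irrational eigenvalues $\tfrac{-1\pm\sqrt{17}}{2}$ and $\tfrac{1\pm\sqrt{17}}{2}$. The route, however, is genuinely different from the paper's. The paper runs a bare induction on $|G|$: take a maximal normal subgroup $M$ of index $2$ or $3$ (Burnside plus solvability), dispatch the index-$2$ case and the index-$3$ case with $3\mid|M|$ by induction (the latter via Lemma~\ref{L section}), and reduce to $G=P\rtimes\langle x\rangle$ with $P$ a Cayley integral $2$-group and $|x|=3$; then an element-by-element analysis of $u\in P$ with $[u,x]\neq1$ (first $|u|=2$, then $|u|=4$, counting cyclic $4$-subgroups through $u^2$) manufactures $A_4$ or $Q_8\rtimes\Z_3$. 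Your version replaces this with structural solvable-group machinery: $O_3(H)=1$, $F(H)=O_2(H)$ self-centralizing, and a coprime/Maschke splitting to locate a $u$-invariant $E_4$ or $Q_8$. Your approach is more conceptual and avoids the somewhat ad hoc subgroup counts in $E_4\times\Z_4$ and $\Z_2\times Q_8$; the paper's is more elementary and self-contained.

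One framing issue to tighten: you pick $H\le G$ of least order with non-normal Sylow $3$, but then invoke minimality on the \emph{quotient} $H/O_3(H)$, which need not be a subgroup of $G$. Either set up the induction on $|G|$ directly (so $H=G$ is an abstract minimal counterexample, and both the normal-closure step and the $O_3$-quotient step go through), or simply drop the reduction to $O_3(H)=1$: with $P_3$ elementary abelian and $P_3>O_3(H)$, any $u\in P_3\setminus O_3(H)$ already centralises $O_3(H)$ but not $O_2(H)$ (else $u\in C_H(F(H))\le F(H)$), and your module analysis proceeds unchanged. A second minor point: your parenthetical ``reduction modulo $2$ has a $2$-group kernel'' justifies nontriviality on the Frattini quotient $A/2A$, not on $\Omega_1(A)$; the $\Omega_1$-statement you use is still true, and follows cleanly from the coprime decomposition $A=C_A(u)\times[A,u]$, since $\Omega_1([A,u])\neq1$ and meets $C_A(u)$ trivially. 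The Maschke step you flagged as the likely obstacle is in fact fine: the complement $\bar V$ to the image of $E$ in $O_2(H)/\langle z\rangle$ pulls back to a $u$-invariant subgroup $V$ of order $8$ with $V\cap\Omega_1(O_2(H))=\langle z\rangle$, so $V$ has a unique involution and six elements of order $4$, hence $V\cong Q_8$.
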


\begin{proof}
We proceed by induction on the order of $G$. There is nothing to prove if $G$ is a $3$-group, hence 
we may assume that $2$ and $3$ are the prime divisors of $|G|,$ see Lemma \ref{L basic}.(i).
Burnside's ``pq'' Theorem gives that $G$ is solvable. 
Let $K$ be a minimal normal subgroup of $G$. It is well-known that $K$ is elementray abelian, and hence $K < G$. 
Let $M$ be a maximal normal subgroup of $G$ which contains $K$. Then $G/M$ is simple and solvable 
(see \cite[2.5.2 and 2.6.1]{Sco64}), which imply that $G/M$ is of prime order (see \cite[Exerxice 2.6.6]{Sco64}).
Therefore, $|G : M|=2$ or $3$. 
Also note that, $M \in \G_k$ because of Lemma \ref{L basic}.(ii).
 
If $|G : M|=2,$ then the induction hypothesis gives that $M$ has a normal Sylow $3$-subgroup,  
which is clearly also a normal Sylow $3$-subgroup of $G$. 

Let $|G : M|=3,$ and suppose that $3 \mid |M|$.  Then the 
the induction hypothesis gives that $M$ has a normal Sylow $3$-subgroup, say $N$. 
By Lemma \ref{L section}, $G/N \in \G_k,$ hence $G/N$  has a normal Sylow $3$-subgroup, say $L$ ($L \cong \Z_3$). 
Then the pre-image $\eta^{-1}(L),$ where $\eta : G \to G/N$ is the natural projection, is a normal Sylow $3$-subgroup 
in $G$. 

We are left with that case that $9 \nmid |G|,$ and $G$ has a normal sylow $2$-subgroup. 
Thus $G = P \rtimes \langle x \rangle,$ where $P$ is a $2$-group and $x$ is of order $3$. 
We complete the proof by showing that $x$ centralizes $P,$ and thus $G$ is abelian. 

Assume, towards a contradiction, that 
$[u,x] \ne 1$ for some $u \in P$. Let $U=\langle u,u^x,u^{x^2} \rangle$ and $V=\langle u, x \rangle$.
Clearly, $U \unlhd V$ and $V \cong U \rtimes \Z_3$. 

Suppose for the moment that $u$ is of order $2$.  
The group $P$ is Cayley integral, see Lemma \ref{L p}, in particular, all involutions of 
$P$ are in the center $Z(P)$. This implies that $U \cong E_4$ or $E_8$. 
If  $U \cong E_4,$ then $V = E_4 \rtimes \Z_3 \cong A_4$. If $U \cong E_8,$ then consider the group 
$\langle u u^x, u u^{x^2}, x \rangle$. As this is isomorphic to $A_4,$ we 
see that in either case, $V$ contains a subgroup isomorphic to $A_4$.
We show next that this is impossible by proving that $A_4 \not\in \G_k$.  
Write $A_4=\langle (1,3)(2,4), (1,2,3) \rangle,$ and let $\Gamma=\cay(A_4,\{a,b,c,c^{-1}\}),$ 
where $a=(1,2)(3,4),$ $b=(1,3)(2,4)$ and $c=(1,2,3)$. Let $\S$ be the symbol of $\Gamma$ relative to  
$H = \langle a,b \rangle$ and the triple $(1,c,c^{-1}),$ and let $\chi$ be the irreducible character of $H$ 
defined by $\chi(a)=1$ and $\chi(b)=-1$. Then 
$$
\S=\begin{pmatrix}  \{a,b\} & \{1\} & \{1\} \\ \{1\} & \{ab,a\} & \{1\} \\ \{1\} & \{1\} & \{b,ab\} \end{pmatrix} \text{ and } 
\chi(\S) = \begin{pmatrix}  0 & 1 & 1 \\  1 &  0 & 1 \\ 1 & 1 & -2 \end{pmatrix}.
$$
The eigenvalues of $\chi(\S)$ are $-1$ and $\frac{1}{2}(-1 \pm \sqrt{17}),$ hence $A_4 \notin \G_k$. 
We conclude that $x$ centralizes all involutions of $P$. 

Let $u$ be of order $4,$ and $W=\langle u \rangle$. Since $[u,x] \ne 1,$ $W^x \ne W,$ and thus    
$W, W^x$ and $W^{x^2}$ are three distinct subgroups of order $4$ contained in $U$. Since $x$ centralizes all involutions 
of $P,$ it follows that  
\begin{equation}\label{cap}
W \cap W^x \cap W^{x^2} = \langle u^2 \rangle.
\end{equation}  

Suppose that $P$ is abelian. Then $P \cong E_{2^m} \times \Z_4^n$ for some $m \ge 0, n \ge 1$. 
Using that $U/\langle u^2 \rangle$ is elementary abelian of order $4$ or $8,$ we deduce that 
$U \cong \Z_2 \times \Z_4$ or $U \cong E_4 \times \Z_4$. The first case cannot occur, $U$ has three distinct 
subgroups of order $4$. In the second case there are four subgroups in $U$ of order $4$ containing $u^2,$ 
and thus must be one normalized by $x$. This, however, gives rise to an element in $G$ of order $12,$ and 
this is impossible.  

Let $P$ be non-abelian.  Then $P \cong E_{2^m} \times Q_8.$ All subgroups of $P$ of order $4$ intersect at 
the same subgroup, the Frattini subgroup $\Phi(P)$. Thus \eqref{cap} gives that $\Phi(P) = \langle u^2 \rangle,$ 
$U/\langle u^2 \rangle \cong E_4$ or $E_8,$ and $U \cong Q_8$ or $U \cong \Z_2 \times Q_8$ respectively. 
If $U \cong Q_8,$ then $V \cong Q_8 \rtimes \Z_3$. 
Let $U \cong Q_8 \times \Z_2$. Then $U$ contains exactly four subgroups isomorphic to 
$Q_8$. Thus one of them must be normalized by $x,$ but not centralized, hence we see that 
$V$ always contains a subgroup isomorphic to $Q_8 \rtimes \Z_3$. 
We finish the proof by showing that $Q_8 \rtimes \Z_3 \not\in \G_k$.  
Write $Q_8=\{\pm 1,\pm i,\pm j,\pm k\}$ (the usual quaternion group) and 
$Q_8 \rtimes \Z_3 = Q_8 \rtimes \langle \sigma \rangle,$ where 
$\sigma^3=1,$ $[-1,\sigma]=1,$ 
$i^\sigma=j, j^\sigma=k,$ and $ k^\sigma=i$.  Let $\Gamma = \cay(Q_8 \rtimes \Z_3, \{i,-i,\sigma,\sigma^{-1}\}),$ 
and $\S$ be the symbol of $\Gamma$ relative to  
$H = \langle -1,\sigma \rangle$ and the quadruple $(1,i,j,k)$. Let $\chi$ be the trivial character of $H$. Then 
$$
\S= \begin{pmatrix}  
\{\sigma,\sigma^{-1}\} & \{1,-1\} & \emptyset & \emptyset \\
\{1,-1\} & \emptyset & \{\sigma^{-1}\} & \{\sigma\} \\
\emptyset & \{\sigma\} & \emptyset & \{1,-1,\sigma^{-1}\} \\
\emptyset & \{\sigma^{-1}\} & \{1,-1,\sigma\} & \emptyset 
       \end{pmatrix} \text{ and }
\chi(\S) = \begin{pmatrix}  2 & 2 & 0 & 0 \\  2 & 0 & 1 & 1 \\ 0 & 1 & 0 & 3 \\ 0 & 1 & 3 & 0 \end{pmatrix}.
$$
The eigenvalues of $\chi(\S)$ are $4,$ $-3,$ and $\frac{1}{2}(1 \pm \sqrt{17}),$
hence $Q_8 \rtimes \Z_3 \notin \G_k,$ as claimed. 
\end{proof}

\begin{lem}\label{L non-nil}
Suppose that $G \in \G_k, \, k \ge 4,$ and $G$ is not nilpotent. Then $G \cong D_6$ or 
$Dic(E_3^n \times \Z_6)$ for some $n \ge 0$.
\end{lem}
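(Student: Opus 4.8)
The plan is to exhibit $G$ as $P\rtimes Q$ with $P$ the Sylow $3$-subgroup and then to force $Q$ down to $\Z_2$ or $\Z_4$ acting by inversion. First I would fix the coarse structure. By Lemma~\ref{L basic}(i) the order of $G$ is divisible only by $2$ and $3$, and since $G$ is not nilpotent both primes occur; Lemma~\ref{L normal} supplies a normal Sylow $3$-subgroup $P$, so $G=P\rtimes Q$ with $Q$ a Sylow $2$-subgroup. By Lemma~\ref{L basic}(ii) and Lemma~\ref{L p} both $P$ and $Q$ are Cayley integral, so by Theorems~\ref{KS} and \ref{ABM} we have $P\cong E_{3^m}$ and $Q\in\{E_{2^b},\,E_{2^b}\times\Z_4^c,\,Q_8\times E_{2^b}\}$. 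As $G$ is not nilpotent, $Q$ acts nontrivially on $P$; I write $K=C_Q(P)$ for the kernel of this action.

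The one genuinely cheap ingredient is an order count that sidesteps most of the representation theory. For $u\in P$ and $x\in Q$ of order $r$ one has $(ux)^r=u\,u^{x^{-1}}\cdots u^{x^{-(r-1)}}\in P$, so $ux$ has order $r\cdot\operatorname{ord}(u\,u^{x^{-1}}\cdots u^{x^{-(r-1)}})$. In particular, if $x$ has order $4$ and centralizes some $u\neq1$, then $\langle u,x\rangle\cong\Z_3\times\Z_4$ has an element of order $12$, against Lemma~\ref{L basic}(i). Hence every order-$4$ element of $Q$ acts on $P$ without nontrivial fixed points; as a special case no order-$4$ element centralizes $P$, so $K$ has exponent at most $2$ and is elementary abelian.

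Next I would prove that each $x\in Q$ either centralizes $P$ or inverts it (sends every $u$ to $u^{-1}$); this is the crux, and it rests on a short list of explicit non-integral Cayley graphs. If an involution $x$ acted in a mixed way — fixing some $v\neq1$ and inverting some $u\neq1$ — then $\langle u,v,x\rangle\cong D_6\times\Z_3$, and I would produce an inverse-closed $S$ with $|S|\le4$ for which $\cay(D_6\times\Z_3,S)$ is non-integral, ruling this out. If $x$ has order $4$ and does not invert $P$, then (being fixed-point-free) it has order $4$ on some two-dimensional subgroup $W\cong E_9$, whence $x^2$ inverts $W$ and $\langle W,x^2\rangle\cong E_9\rtimes\Z_2$ with the involution inverting $E_9$; a second certificate shows this group is not in $\G_k$. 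These two exclusions, together with the fixed-point-free property, leave only the identity and inversion, so the image of the action has order $2$, $K$ has index $2$ in $Q$, and — $K$ being elementary abelian of index $2$ — $Q$ is abelian, equal to $E_{2^b}$ or $E_{2^b}\times\Z_4$.

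Finally I would minimize $Q$, $K$ and $m$. Fix $x\in Q\setminus K$ (so $x$ inverts $P$) and $u\in P$ of order $3$. If $Q$ has exponent $2$ then $x$ is an involution and $\langle u,x\rangle\cong D_6$; here $m\ge2$ would embed $E_9\rtimes\Z_2$ and so is excluded, forcing $m=1$, while any further involution $y\in K$ would give $\langle u,x,y\rangle\cong D_6\times\Z_2\cong D_{12}$, excluded by a third certificate, so $K=1$, $Q\cong\Z_2$ and $G\cong D_6$. If instead $x$ has order $4$ then $x^2\in K$ and $\langle u,x\rangle\cong Dic_{12}$; any involution $y\in K\setminus\langle x^2\rangle$ would give $\langle u,x,y\rangle\cong Dic_{12}\times\Z_2$, excluded by a fourth certificate, so $K=\langle x^2\rangle\cong\Z_2$ and $Q=\langle x\rangle\cong\Z_4$. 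Then $G=E_{3^m}\rtimes\Z_4$ with $x$ inverting $P$ and $x^2$ central, i.e. $G\cong Dic(E_{3^m}\times\Z_2)\cong Dic(E_3^{m-1}\times\Z_6)$ with $m-1\ge0$, as claimed. The main obstacle is the third paragraph: the order-count trick kills every order-$4$ action except the two that survive and handles the mixed order-$4$ case for free, but the mixed involution, the order-$2$ inversion on a rank-$\ge2$ subgroup, and the two ``extra central involution'' configurations $D_{12}$ and $Dic_{12}\times\Z_2$ genuinely require explicit spectra. The real point is that this finite list of graphs on groups of order at most $24$ suffices to eliminate the a priori unbounded families $E_{2^b}$ and $E_{2^b}\times\Z_4$, because every forbidden action embeds one of the listed groups.
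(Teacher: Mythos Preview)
Your proposal is correct, and the overall architecture matches the paper's: reduce to $G=P\rtimes Q$ with $P$ the normal Sylow $3$-subgroup, show that every $2$-element either centralises or inverts $P$, and then whittle $Q$ down using a short list of forbidden subgroups of order at most $24$. The certificates you isolate --- $D_6\times\Z_3$, $E_9\rtimes\Z_2$ (inversion), $D_{12}$, and $Dic_{12}\times\Z_2$ --- are exactly the ones the paper uses (the last via the quotient $Dic_{12}\times\Z_2/\langle x^2\rangle\cong D_{12}$ and Lemma~\ref{L basic}(iii)).

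Where you differ is in organisation and in two technical steps. You invoke Theorems~\ref{KS} and~\ref{ABM} at the outset to pin down the isomorphism type of $Q$, then run everything through the action homomorphism $Q\to\mathrm{Aut}(P)$ and its kernel $K$; the observation that $K$ is elementary abelian of index $2$ kills the $Q_8\times E_{2^b}$ possibility structurally, whereas the paper dispatches $Q_8$ at the end by a centraliser-order count producing an element of order $12$. For the order-$4$ case you use linear algebra over $\mathbb{F}_3$ (an order-$4$ fixed-point-free non-inversion forces an irreducible $2$-dimensional block on which $x^2=-1$, hence an $E_9\rtimes\Z_2$ subgroup), while the paper, having already placed all involutions in $C_G(P)$, argues directly that $u\mapsto u^xu$ produces a nontrivial $x$-fixed point and hence an element of order $12$. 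Your route is a little more uniform and makes the finiteness of the certificate list transparent; the paper's is slightly more elementary in that it never needs the full Cayley-integral classification for $Q$ until the final line.
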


\begin{proof}
By Lemma \ref{L normal}, $G$ contains a normal Sylow $3$-subgroup, say $P$. 
Since $G$ is non-abelian, there is an element $x$ of order $2$ or $4$ such that $x \notin C_G(P)$.

We consider first the case when $x$ is of order $2$. Suppose that $w^x \notin  \langle w \rangle$ for some $w \in P$. 
Let  $u=w^x w,$ $v=w^x w^{-1}$ and $U = \langle u, v \rangle$. 
Then $U \cong E_9,$ $u^x= u$ and $v^x = v^{-1}$. Thus $V = \langle U, x \rangle \cong D_6 \times \Z_3$. 
We exclude this possibility by showing that $V \notin \G_k$. 
Let $\Gamma = \cay(V,\{ x u,x u^{-1},x v \}),$  $\S$ be the symbol of $\Gamma$ relative to 
$U$ and the pair $(1,x),$ and let $\chi$ be the irreducible character of $U$ defined by $\chi(u)=1$ and $\chi(v)=\xi=e^{2i \pi/3}$. 
Then 
$$
\S=\begin{pmatrix}  \emptyset & \{u,u^{-1},v\} \\ \{u,u^{-1},v^{-1}\} & \emptyset \end{pmatrix} \text{ and } 
\chi(\S) = \begin{pmatrix}  0 & 2+\xi \\  2+\xi^{-1} & 0 \end{pmatrix}.
$$
The eigenvalues of $\chi(\S)$ are $\pm \sqrt{3},$ hence $V \notin \G_k$. 
We are left with the case that $x$ inverts all elements of $P$. Assume that $|P| > 3$. Then let 
$U= \langle u,v \rangle \cong E_9,$ and  $V = \langle U,x \rangle \cong E_9 \rtimes \Z_2$. 
Copying above argument for the graph $\Gamma = \cay(V,\{ xu,xu^{-1},xv\}),$ we find again 
that $\pm \sqrt{3}$ are eigenvalues of $\Gamma,$ a contradiction. Therefore, $|P|=3$. 
 Let  $N = C_G(P),$ the centralizer of $P$ in $G$.   
Notice that, $N$ is an abelian normal subgroup of $G,$ and by the N/C Theorem (see \cite[Theorem 3.2.3]{Sco64}), 
$G/N$ is isomorphic to a subgroup of Aut$(P) \cong \Z_2$. Since $G$ is non-abelian, $G \ne N,$ and thus $|G : N|=2$. 
If $N \ne P,$ then take an involution $y \in N$. Then $y \in Z(G),$ hence 
$\langle x,y,P \rangle = \langle x,u \rangle \times \langle y \rangle \cong D_6 \times \Z_2
\cong D_{12},$ a contradiction. Therefore, $N=P,$  and $G \cong D_6$.  

To sum up, we may assume that $G \not\cong D_6$ and  all involutions of $G$ are in $C_G(P)$. 
Let $x \in G \setminus C_G(P),$ and let $Q$ be a Sylow $2$-subgroup such that $x \in Q$.  
Suppose that  $u^x \notin \langle u \rangle$ for some $u \in P$. 
Then using that $[x^2,u]=1,$ we can prove, as above, that $v^x = v$ for some $v \in P,$ hence 
$v x$ is of order $12,$ a contradiction. Therefore, $u^x = u^{-1}$ for every $u \in P$.  
Let $y$ be an involution of $Q$ such that $y \ne x^2,$ and let $V =  \langle x,y,u \rangle,$ where $u  \in P, \, u \ne 1$. 
Then $y \in Z(G),$ hence $V = \langle x,u \rangle \times \langle y \rangle \cong Dic_{12} \times \Z_2$. 
Then $\langle x^2 \rangle \unlhd V,$ and $V/\langle x^2 \rangle \cong D_6 \times \Z_2 \cong D_{12}$.  
This and Lemma \ref{L basic}.(iii) yield that $V \notin \G_k,$ a contradiction.  
Thus $Q$ has a unique involution, and hence $Q \cong \Z_4$ or $Q_8$ (recall that $Q$ is Cayley integral). 
Let $Q \cong Q_8$ and $K = \langle u, Q \rangle$ for some $u \in P, \, u \ne 1$. 
Then $\langle u \rangle \unlhd K,$ 
$|K| = 24,$ and the centralizer $C_K(u)$ is of order at least $12$. 
This shows that  $C_K(u)$ contains an element of order $4,$ and so $K$ contains element of order $12,$ a contradiction.  
Therefore, $Q \cong \Z_4,$ and $G \cong Dic(E_{3^n} \times \Z_6),$ where $n \ge 0$. This completes the proof of the lemma. 
\end{proof}

Everything is prepared to derive the main theorem. \medskip

\noindent{\it Proof of Theorem \ref{T main}.} Fix a number $k \ge 4,$ and let $G \in \G_k$ be a group which is not Cayley integral.  
By Corollary \ref{C nil}, $G$ is not nilpotent, hence by Lemma \ref{L non-nil}, $G \cong Dic(E_{3^n} \times \Z_6)$ for some 
$n \ge 1$ (here we use that  $Dic(\Z_6) \cong Dic_{12},$ which is Cayley integral). 
By Lemma \ref{L dic}, these groups are also in $\G_4$ and $\G_5,$ and this settles the 
the second part of the theorem. 

It remains to prove that $Dic(E_{3^n} \times \Z_6) \notin \G_k$ if $n \ge 1$ and $k  \ge 6$.  
Observe that, all these groups contain a subgroup isomorphic to $Dic(\Z_3 \times \Z_6)$.
Therefore, it is sufficient to show that $Dic(\Z_3 \times \Z_6) \notin \G_k$ if $k \ge 6$ (see  Lemma \ref{L basic}.(ii)). 
Write $Dic(\Z_3 \times \Z_6) = E \rtimes \langle x \rangle,$ where $E \cong E_9,$ $x$ is of order $4,$ and $x$ inverts every 
element in $E$. Then $\langle x^2 \rangle$ is normal in $E \rtimes \langle x \rangle,$ and 
$(E \rtimes \langle x \rangle)/\langle x^2 \rangle \cong E_9 \rtimes \Z_2$. However, the latter group is not in $\G_3$ 
(see the proof of Lemma \ref{L normal}). This and Lemma \ref{L basic}.(iii) yield that $E \rtimes \langle x \rangle \notin \G_k$ 
if $k \ge 6$. This completes the proof of the theorem. 
\hfill $\square$

\end{document}